\documentclass[11pt]{amsart}

\usepackage{amscd,amssymb,mathrsfs}
\usepackage{enumerate}
\usepackage{xcolor}
\usepackage{tikz-cd}
\usepackage{tikz}
\usetikzlibrary{matrix}

\setcounter{tocdepth}{1}

%%%%%%%%%%%%%%%%%%%%%%%%%%%%%%%%%%%%%%%%%%%%%%%%%%

\usepackage[hidelinks]{hyperref}
%\usepackage[pagebackref]{hyperref}

%%%%%%%%%%%%%%%%%%%%%%%%%%%%%%%%%%%%%%%%%%%%%%%%%%

%%%% mathbf

\newcommand{\Z}{\mathbb Z}
\newcommand{\Q}{\mathbb Q}
\newcommand{\R}{\mathbb R}
\newcommand{\C}{\mathbb C}
\renewcommand{\P}{\mathbb P}

\newcommand{\bL}{\mathbb L}

\newcommand{\bV}{\mathbb V}

\newcommand\kk{{\Bbbk}}

%%%% mathcal

\newcommand{\cM}{\mathcal M}

%%%% mathscr

\newcommand{\sB}{\mathscr B}

%%%% mathfrak

\newcommand{\g}{\mathfrak{g}}

\newcommand{\p}{\mathfrak{p}}

%%%% Greek

\newcommand{\e}{\epsilon}
\newcommand{\w}{\omega}

\newcommand{\G}{\Gamma}

%%%% bold

\newcommand{\ee}{\mathbf{e}}

\newcommand{\bdelta}{\boldsymbol{\delta}}
\newcommand{\bkappa}{\boldsymbol{\kappa}}

%%%% decorated characters

\newcommand{\zbar}{\overline{z}}

\newcommand{\deltabar}{\overline{\delta}}

\newcommand{\xibar}{\overline{\xi}}
\newcommand{\wbar}{\overline{\w}}
\newcommand{\stilde}{\tilde{s}}
\newcommand{\Omegatilde}{\widetilde{\Omega}}
\newcommand{\Htilde}{\widetilde{H}}
\newcommand{\Dtilde}{\widetilde{D}}
\newcommand{\Ihat}{\hat{I}}

\renewcommand{\hbar}{\overline{h}}

\newcommand{\Xbar}{{\overline{X}}}

	%% changed tilde to hat

%%%% misc

\newcommand{\del}{\partial}
\newcommand{\delbar}{\overline{\partial}}
\newcommand{\tbar}{\overline{t}}

\newcommand{\Ch}{\mathrm{Ch}}

\newcommand{\MHS}{\mathsf{MHS}}

\newcommand{\un}{\mathrm{un}}

		%% almost complex structure

\newcommand{\blank}{\phantom{x}}
\newcommand{\round}[1]{{(#1)}}

%%%%

\newcommand{\bdot}{\bullet}

\newcommand{\bs}{\backslash}

%%%% operators

\newcommand\im{\operatorname{im}}               % image
\newcommand\id{\operatorname{id}}
\newcommand\coker{\operatorname{coker}}

\newcommand\Hom{\operatorname{Hom}}
\newcommand\End{\operatorname{End}}
\newcommand\Ext{\operatorname{Ext}}
\newcommand\Aut{\operatorname{Aut}}

\newcommand\Gr{\operatorname{Gr}}
\newcommand\Res{\operatorname{Res}}

\renewcommand\Im{\operatorname{Im}}
\renewcommand\Re{\operatorname{Re}}

%%%%%%%%%%%%%%%%% environments %%%%%%%%%%%%%%%%%%

\newtheorem{theorem}{Theorem}[section]
\newtheorem{lemma}[theorem]{Lemma}
\newtheorem{proposition}[theorem]{Proposition}
\newtheorem{corollary}[theorem]{Corollary}
\newtheorem{bigtheorem}{Theorem}

\theoremstyle{definition}

\theoremstyle{remark}
\newtheorem{remark}[theorem]{Remark}

%%%%%%%%%%%%%%%%%%%%%%%%%%%%%%%%%%%%%%%%%%%%%%%%%

\begin{document}

\title{Periods of Real Biextensions}

\author{Richard Hain}
\address{Department of Mathematics\\ Duke University\\ Durham, NC 27708-0320}
\email{hain@math.duke.edu}

\thanks{ORCID iD: {\sf 0000-0002-7009-6971}}

\date{\today}

\subjclass{Primary 14C30; Secondary 32G20}

\keywords{biextension, variation of mixed Hodge structure, period mapping}

\maketitle

\begin{abstract}
A real biextension is a mixed Hodge structure that is an extension of $\R$ by a mixed Hodge structure with weights $-1$ and $-2$. It is {\em split} if it is a direct sum of its graded quotients. A unipotent real biextension over an algebraic manifold $X$ is a variation of mixed Hodge structure over $X$, each of whose fibers is a real biextension and whose weight graded quotients are constant. In this paper we show that if a unipotent real biextension has non-abelian monodromy, then its ``general fiber'' is not split. This result is a tool for investigating the boundary behaviour of normal functions. 
\end{abstract}

\section{Introduction}

Suppose that $A$ is a noetherian subring of $\R$. An $A$-biextension $V$ is an $A$ mixed Hodge structure (MHS) with three non-trivial weight graded quotients of weights $0,-1$ and $-2$. There is no loss of generality in assuming, as we do, that the $0$th weight graded quotient of $V$ is the Hodge structure $A(0)$ of type $(0,0)$.

Real biextensions are, by definition, $\R$-biextensions. In this paper we study the period mappings of families of real biextensions with unipotent monodromy over a smooth variety.\footnote{All varieties in this paper are complex and quasi-projective.} Families of real biextensions whose weight $-2$ graded quotient is $\R(1)$ and their period mappings appear naturally in the study of the archimedean component of height pairings of algebraic cycles. (See \cite{hain:biext,bloch-dejong,dejong:jumps}, for example.) The more general biextensions studied in this paper arise naturally when investigating the boundary behaviour of normal functions, as explained below.

A {\em unipotent $A$-biextension} over a smooth variety $X$ is an admissible variation $\bV$ of $A$-MHS over $X$ with unipotent monodromy whose fibers are biextensions. The unipotence of the monodromy is equivalent to the weight graded quotients of $\bV$ being constant variations of Hodge structure, \cite[p.~84]{hain-zucker}. When $A=\R$ we say that $\bV$ is a {\em unipotent real biextension}.

\begin{bigtheorem}
\label{thm:main}
Suppose that $X$ is a quasi-projective manifold and that $\bV$ is a unipotent real biextension over $X$. If either
\begin{enumerate}

\item the monodromy representation of $\bV$ has non-abelian image, or

\item the monodromy group of $\bV$ is abelian and $\Gr^W_{-2} H_1(X)$ acts non-trivially on the fiber over $x\in X$,

\end{enumerate}
then there is a nowhere dense real analytic subvariety $\Sigma$ of $X$ such that the fiber $V_x$ over $x\in X$ splits if and only if $x\in\Sigma$.
\end{bigtheorem}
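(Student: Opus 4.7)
The plan is to construct a canonical real-analytic height function $\delta:X\to K^{(-1,-1)}_\R$ whose zero set is $\Sigma$, and then to show $\delta\not\equiv 0$ by deriving from the Griffiths transversality of the period map a cohomological identity incompatible with the monodromy hypotheses.

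For the construction, apply the Deligne--Cattani--Kaplan theorem: every real MHS $V$ is uniquely of the form $e^{i\delta_V}\cdot V_{\mathrm{split}}$ with $\delta_V\in\bigoplus_{p,q\geq 1}\g^{(-p,-q)}_\R$, and $V$ is $\R$-split iff $\delta_V=0$. For a biextension of weights $\{0,-1,-2\}$, a direct bigrade count in $\g=\g^{-1}\oplus\g^{-2}$ shows the only allowed piece is $(-1,-1)$ in $\g^{-2}=K$, so $\delta_V\in K^{(-1,-1)}_\R$. Applied fiberwise this yields the required real-analytic $\delta:X\to K^{(-1,-1)}_\R$ with $\Sigma=\delta^{-1}(0)$.

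Suppose for contradiction that $\delta\equiv 0$. Let $G$ be the unipotent algebraic group of automorphisms of $V$ preserving $W_\bullet$ and trivial on $\Gr^W_\bullet$; $G$ is two-step nilpotent with centre $G_{-2}\cong K$. The lifted period map $\tilde\phi:\tilde X\to D$ then factors through the real-analytic subset $D^{\mathrm{split}}=G(\R)\cdot F_0\subset D$, and writing $\tilde\phi(\tilde x)=u(\tilde x)F_0$ with $u:\tilde X\to G(\R)$ real-analytic satisfying $u(\gamma\tilde x)=\rho(\gamma)u(\tilde x)$, Griffiths transversality becomes $(u^{-1}du)^{0,1}\in F^0\g_\C$. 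Setting $\omega:=u^{-1}du$, which descends to a $\g_\R$-valued $1$-form on $X$, and decomposing $u=\exp(A^{-1}+A^{-2})$ by weight, the two-step Baker--Campbell--Hausdorff formula $\omega^{-1}=dA^{-1}$, $\omega^{-2}=dA^{-2}-\tfrac12[A^{-1},dA^{-1}]$, together with the Maurer--Cartan equation $d\omega+\tfrac12[\omega,\omega]=0$, yield after Hodge-type bookkeeping in $K^{(-1,-1)}_\R$ a Poincar\'e--Lelong-type identity exhibiting the $(1,1)$-form $\tfrac12[\omega^{-1,(1,0)},\omega^{-1,(0,1)}]^{(-1,-1)}$ in $\Omega^{1,1}(X)\otimes K^{(-1,-1)}_\R$ as $d$-exact.

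The cohomology class of this $(1,1)$-form in $H^2(X,K^{(-1,-1)}_\R)$, when paired with $H_2(X,\R)$, computes via Chen's iterated integrals the $K^{(-1,-1)}_\R$-projection of the commutator pairing $[\bar\rho(\gamma_1),\bar\rho(\gamma_2)]$ on commutator cycles $[\gamma_1,\gamma_2]$. In case (1), admissibility together with the vanishing of $\Ext^1_\R(\R,H)$ and $\Ext^1_\R(H,K)$ in real MHS forces the full obstruction to splitting to live in this $K^{(-1,-1)}_\R$-projection, so non-abelianness of the monodromy yields a non-zero commutator there, contradicting the vanishing of the cohomology class. In case (2), abelianness of $\rho$ together with admissibility identifies the $\g^{-2}$-component $\bar R^{-2}:H_1(X,\R)\to K_\R$ of $\log\rho$ as a MHS morphism factoring through $\Gr^W_{-2}H_1(X)$; the vanishing of the above class forces the $K^{(-1,-1)}_\R$-projection of $\bar R^{-2}|_{\Gr^W_{-2}H_1(X)}$ to be zero, which combined with the MHS morphism property of $\bar R^{-2}$ into the pure weight $-2$ Hodge structure $K$ forces $\bar R^{-2}|_{\Gr^W_{-2}H_1(X)}=0$, contradicting (2). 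The main technical obstacle is the Hodge-type bookkeeping required to derive the Poincar\'e--Lelong identity from the Maurer--Cartan equation and Deligne's canonical $\delta$-formula, and to verify that the non-abelian or $\Gr^W_{-2}$-non-trivial part of the monodromy is indeed detected by the $(-1,-1)$-projection alone.
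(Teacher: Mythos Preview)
Your approach differs substantially from the paper's, and the proposal has a genuine gap at its technical core.

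The paper does not argue by contradiction from $\delta\equiv 0$. Instead it proceeds constructively: it introduces the \emph{canonical} unipotent biextension over $X\times X$ with fiber $\R\pi_1(X;p,q)/W_{-3}$, computes its period map $\Psi_p$ explicitly via Chen's iterated integrals and the $\del\delbar$-lemma, and then proves a \emph{non-degeneracy} theorem asserting that the image of $\Psi_p$ is not contained in any real hyperplane of $i[\Gr^W_{-2}\p(X,p)]_\R^{-1,-1}$. Non-degeneracy is established first for curves (via an elementary linear-independence lemma for holomorphic functions) and then for general $X$ by restriction to a generic curve section and the Lefschetz hyperplane theorem. Finally, since the monodromy homomorphism is a morphism of MHS, the period map of an arbitrary $\bV$ satisfies $\Psi_\bV=\Psi_\bV(p)+\Phi_p\circ\Psi_p$, where $\Phi_p:\Gr^W_{-2}\p(X,p)\to C$ is induced by the infinitesimal monodromy; non-degeneracy of $\Psi_p$ together with nontriviality of $\Phi_p$ then forces $\Psi_\bV$ to be non-constant.

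Your outline tries to bypass the canonical variation entirely, arguing directly from Griffiths transversality. The difficulty is that the crucial ``Poincar\'e--Lelong-type identity'' is never stated precisely, let alone proved---you yourself flag it as the main obstacle. The Maurer--Cartan relation $d\omega^{-2}+\tfrac12[\omega^{-1},\omega^{-1}]=0$ holds for \emph{any} flat $\g$-valued connection, regardless of whether $\delta\equiv 0$; you have not isolated what additional constraint the hypothesis $\delta\equiv 0$ (equivalently, that $\omega$ is $\g_\R$-valued) combined with transversality actually imposes on the $(-1,-1)$-component. Your link to monodromy is also imprecise: ``commutator cycles $[\gamma_1,\gamma_2]$'' are null-homologous $1$-cycles, so they cannot be paired with a class in $H^2(X)$ as you write; what you presumably intend is to integrate over a bounding $2$-chain, but then the value is not a cohomological invariant and exactness of your $2$-form yields no contradiction. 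Finally, even granting the identity, non-abelian monodromy only says the commutator map $\Lambda^2 H_1(\Xbar)\to C$ is nonzero---it does not by itself force the $(-1,-1)$-component to be nonzero, which is what your argument needs. The paper's route through the canonical variation and its explicit period formula sidesteps all of these issues.
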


The most fundamental examples of unipotent biextensions over $X$ are the truncated path torsors discussed in Sections~\ref{sec:unipt_paths}, \ref{sec:de_rham} and \ref{sec:hodge_path}. They are, in a sense, universal and play a central role in the proof.

Real variations of MHS behave very differently from variations of $\Q$-MHS. If a variation of $\Q$-MHS over a smooth variety has non-trivial monodromy, then the MHS on the general fiber does not split as a MHS. This is not the case for real variations of MHS as every extension of $\R$ by a variation of Hodge structure of weight $-1$ splits regardless of the monodromy representation. For this reason, it is not clear that the general fiber of a unipotent real biextension with non-abelian monodromy does not split, as the theorem asserts.

The theorem should be a useful tool for understanding the boundary behaviour of normal functions. An admissible normal function $\nu$ over a smooth variety determines a ``residual variation" of MHS along the smooth locus of each of its boundary divisors, \cite[Rem.~13.3]{hain:ceresa}. Each of these is an extension of $\Q$ by an admissible variation of MHS $\bL$ with strictly negative weights, so that $\bL/W_{-3}$ is a biextension. If, for example, the restriction of $\bL$ to a subvariety $Y$ of a boundary divisor is a unipotent variation with non-abelian monodromy, as occurs in the case of the normal function $\nu$ of the Ceresa cycle (see \cite[\S15]{hain:ceresa}), the theorem implies that the normal derivative of $\nu$ at a ``general point'' of the boundary divisor is non-zero. This is because, under some mild hypotheses, the normal derivative vanishes at a point $y\in Y$ if and only if the associated {\em real} biextension $\bL/W_{-3}$ splits there. The theorem is used in \cite{hain:ceresa} to show that, when $g>3$, the normal derivative of the Ceresa normal function is non-zero at a general point of the boundary divisor $\Delta_0$ of $\cM_g$, the moduli space of curves.

\subsection{Overview}

A real biextension\footnote{As remarked above there is no loss of generality in assuming that $\Gr^W_0 V = A$. One can always replace a mixed Hodge structure $E$ with weight graded quotients $\Gr^W_0 E = A_0$, $\Gr^W_{-1} E = A_1$ and $\Gr^W_{-2} E = A_2$ by $\Hom_A(A_0,E)$ and then restricting to those elements that project to a multiple of $\id_{A_0}$ in $\Gr^W_0\Hom_A(A_0,E)$ to obtain a biextension $V$ with weight graded quotients $A$, $\Hom(A_0,A_1)$ and $\Hom(A_0,A_2)$. The original MHS $E$ can be recovered as a sub MHS of $A_0 \otimes V$.} $V$ with weight graded quotients
\begin{equation}
\label{eqn:GrV}
B = \Gr^W_{-1} V \text{ and } C = \Gr^W_{-2} V
\end{equation}
is determined by its period, which is an element of $iC_\R^{-1,-1}$. (See Section~\ref{sec:biexts}.) It vanishes if and only if $V$ splits as an $\R$-MHS. So a unipotent real biextension $\bV$ over $X$ with weight graded quotients (\ref{eqn:GrV}) determines a period mapping
$$
\Psi_\bV : X \to iC_\R^{-1,-1}.
$$
The fiber of $\bV$ over $q \in X$ splits if and only if $\Psi_\bV(q)=0$. One of the main tasks of the paper is to find a reasonably explicit formula for this map.

To obtain this formula, we first study the most fundamental example of a unipotent biextension associated to $X$. It is the unipotent variation of MHS over $X\times X$ whose fiber over $(p,q)$ is
\begin{equation}
\label{eqn:path_torsor}
\Z\pi_1(X;p,q)/W_{-3},
\end{equation}
where $\pi_1(X;p,q)$ is the set of homotopy classes of paths in $X$ from $p$ to $q$. This variation and its generalizations were first studied in \cite{hain-zucker} where they are called the {\em canonical variations} of MHS over $X^2$. The construction of the MHS on (\ref{eqn:path_torsor}) is recalled in Section~\ref{sec:hodge_path} after reviewing the relevant de~Rham theorem in Section~\ref{sec:de_rham}.

The formula for the period of the biextension $\R\pi_1(X;p,q)/W_{-3}$ is derived in Section~\ref{sec:period}. For clarity, we first derive it when $X$ is projective where the formula is simpler because the Hodge structure $H^1(X)$ is pure of weight 1. 

The next step in the proof is to establish a ``non-degeneracy'' result (Theorem~\ref{thm:non-degeneracy}) for the period mapping of the real biextension quotient (\ref{eqn:path_torsor}) of the canonical variation. This is achieved in Section~\ref{sec:non-degen} using the period computations in Section~\ref{sec:period}. The curve case is proved first; the general case follows from it via the Lefschetz hyperplane theorem.

To derive the formula for the period map of a general unipotent real biextension and to complete the proof of the theorem, we need the ``well-known'' fact that if $\bV$ is an admissible unipotent variation of MHS over $X$, then, for $m$ sufficiently large, the map
$$
A\pi_1(X;p,q)/W_{-m}\otimes V_p \to V_q
$$
induced by parallel transport is a morphism of MHS. Since I could not find a proof in the literature, I have provided one in Section~\ref{sec:vmhs}. As explained in Section~\ref{sec:proof}, this implies that the period mapping of a unipotent real biextension over $X$ can be computed from the period mapping of the (second real) canonical variation of $X$ and its monodromy homomorphism. The main theorem follows from this and the non-degeneracy result.

\begin{remark}
One should also be able to obtain the period computation in Section~\ref{sec:period} using Goncharov's Hodge correlators, \cite{goncharov}. That said, our derivation is direct and relatively elementary.
\end{remark}

\subsection{A note to the reader}

Those unfamiliar with iterated integrals, Chen's $\pi_1$ de~Rham theorem and the associated Hodge theory should consult the expository article \cite{hain:bowdoin}. Complete proofs of many of the technical statements needed in this paper can be found in \cite{chen}, \cite{hain:dht} and \cite{hain-zucker}. A nice exposition of how to compute periods of extensions of Hodge structures can be found in Carlson's paper \cite{carlson}.

\subsection{Conventions}
All algebraic varieties will be complex algebraic varieties. Every quasi-projective variety will be assumed to be irreducible. Consequently, every quasi-projective manifold is connected when endowed with the complex topology.

\bigskip

\noindent{\em Acknowledgments:} I am indebted to the referees for their careful reading of the manuscript and their helpful comments and corrections. I would also like to think Robert Bryant for pointing out the relevance of the Cartan--K\"ahler theorem.

\section{Moduli of biextensions}
\label{sec:biexts}

Suppose that $B$ and $C$ are $A$-Hodge structures of weights $-1$ and $-2$, respectively. Assume that, as $A$-modules, $\Ext^1_A(A,B)$, $\Ext^1_A(A,C)$ and $\Ext^1_A(B,C)$ vanish. This will hold whenever $B$ and $C$ are free $A$-modules. Throughout we will regard $A$ as the Hodge structure $A(0)$ of type $(0,0)$.

Define $\sB_A(B,C)$ to be the set of {\em framed} biextensions with weight graded quotients $A$, $B$ and $C$. This is the set of $A$-MHS $V$ satisfying
$$
V = W_0 V \text{ and } W_{-3} V = 0
$$
endowed with isomorphisms
$$
A \overset{\simeq}{\longrightarrow} \Gr^W_0 V,\ B \overset{\simeq}{\longrightarrow} \Gr^W_{-1} V,\ C \overset{\simeq}{\longrightarrow} \Gr^W_{-2} V.
$$
It has a canonical base point, namely the split biextension
$$
V_0 := A \oplus B \oplus C.
$$
Denote the fraction field of $A$ by $\kk$. Set $G = W_{-1} \Aut V_0$. This is the affine $\kk$-group where, for each $\kk$ algebra $R$,
$$
G(R) = \{\phi \in \Aut V_0\otimes_A R : \Gr^W_\bdot \phi = \id_{\Gr^W_\bdot V}\}.
$$
Its Lie algebra is
$$
\g := W_{-1} \End V_0\otimes_A \kk.
$$
It has a natural MHS. Define $F^0 G$ to be the subgroup of $G\times_\kk \C$ with Lie algebra $F^0 \g$.

There is a natural bijection
$$
\sB_A(B,C) \overset{\simeq}{\longrightarrow} G(A) \bs G(\C) / F^0 G(\C).
$$
(See \cite[Prop.~5.10]{hain-zucker}.) This induces a natural topology on $\sB_A(B,C)$ and a complex structure on it when $A=\Z$. The projection
\begin{equation}
\label{eqn:torsor}
\sB_A(B,C) \to \Ext^1_\MHS(A,B)\times \Ext^1_\MHS(B,C)
\end{equation}
defined by $V \mapsto (V/W_{-2},W_{-1}V)$ is an $\Ext^1_\MHS(A,C)$ torsor.

The following is well known, but included as it is important for the rest of this paper.

\begin{proposition}
\label{prop:real_MHS}
\phantom{xx}
\begin{enumerate}

\item Each real Hodge structure $E$ of weight $m$ decomposes
$$
E = \bigoplus_{\substack{p+q = m\\ p\ge q}} E(p,q)
$$
where $E(p,q)_\C = E^{p,q} \oplus E^{q,p}$ when $p\neq q$, and $E(p,p)_\C = E^{p,p}$.

\item If $E$ and $K$ are real Hodge structures of weights $m$ and $k$, then
$$
\Ext_\MHS^1(E,K) \cong
\begin{cases}

0 &  k \ge m-1, \\

i\Hom(E,K)_\R^{-1,-1} & k = m-2.

\end{cases}
$$
\end{enumerate}
\end{proposition}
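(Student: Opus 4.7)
The plan for (1) is direct: use the Hodge decomposition $E_\C = \bigoplus_{p+q=m} E^{p,q}$ together with the reality constraint $\overline{E^{p,q}} = E^{q,p}$. For $p > q$ with $p + q = m$, I would define $E(p,q) \subset E_\R$ as the real points of $E^{p,q} \oplus E^{q,p}$, namely $\{v + \overline{v} : v \in E^{p,q}\}$; for $m = 2p$ the subspace $E^{p,p}$ is already conjugation-invariant, and one takes $E(p,p) := E_\R \cap E^{p,p}$. The asserted decomposition of $E_\R$ as the direct sum of the $E(p,q)$ then follows by taking real points of the Hodge decomposition.

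For (2), my strategy is first to reduce to computing $\Ext^1_\MHS(\R(0), H)$ for $H := \Hom(E, K)$, a pure real Hodge structure of weight $n = k - e$, via the tensor-hom adjunction. To parameterize extensions $0 \to H \to V \to \R(0) \to 0$, I would choose a real-linear splitting $\sigma \colon \R \to V_\R$ (which always exists) and encode the Hodge filtration by a single class $\phi \in H_\C / F^0 H_\C$: since $F^0 V_\C$ must project isomorphically onto $F^0(V/H)_\C = \C$ and satisfy $F^0 V_\C \cap H_\C = F^0 H_\C$, it is forced to have the form $\C \cdot (\sigma(1) + \phi) \oplus F^0 H_\C$. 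Modifying $\sigma$ by an element of $H_\R$ shifts $\phi$ by the same element, yielding
$$
\Ext^1_\MHS(\R(0), H) \;=\; H_\C \big/ (F^0 H_\C + H_\R).
$$

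It remains to evaluate this quotient case by case. For $n > 0$ the weight filtration forces the extension to split ($W_e V$ is a canonical lift of the quotient), and for $n = 0$ the MHS $V$ is pure and hence semisimple, so $\Ext^1 = 0$. For $n = -1$ the relation $\overline{H^{0,-1}} = H^{-1,0}$ makes the restriction of the projection $H_\C \twoheadrightarrow H_\C / F^0 H_\C = H^{-1,0}$ to $H_\R$ an $\R$-linear isomorphism, whence $F^0 H_\C + H_\R = H_\C$. For $n = -2$ the same argument applied to the $(0,-2)$ and $(-2,0)$ pieces kills those directions, leaving $H^{-1,-1} / H(-1,-1)_\R$, where $H(-1,-1)$ is the real form of $H^{-1,-1}$ produced by part (1). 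The only subtle point, and hence the place requiring most care, is to make the identification $H^{-1,-1} / H_\R^{-1,-1} \cong i H_\R^{-1,-1}$ canonical; the imaginary-part map $v \mapsto (v - \overline{v})/2$ supplies the canonical isomorphism and accounts for the factor of $i$ in the statement.
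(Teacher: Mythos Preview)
Your proof is correct and follows the same approach as the paper: reduce via the adjunction $\Ext^1_\MHS(E,K)\cong\Ext^1_\MHS(\R,\Hom(E,K))$ and then use the standard formula $\Ext^1_\MHS(\R,M)=M_\C/(M_\R+F^0M)$ for a pure $M$, evaluating it weight by weight. You supply more detail than the paper (which leaves (1) as an exercise and merely quotes the Ext formula for (2)); the only slip is that in the $n>0$ case the canonical splitting is $W_0V$, not ``$W_eV$''.
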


\begin{proof}
In all cases complex conjugation acts on $E(p,q)$ and
$$
E(p,q) = E(p,q)_\R \oplus i E(p,q)_\R
$$
where $E(p,q)_\R$ denotes the elements fixed by complex conjugation. The proof of (1) is left as an exercise. To prove the second, use the fact that
$$
\Ext_\MHS^1(E,K) \cong \Ext^1(\R,\Hom(E,K))
$$
and, for a real Hodge structure $M$,
$$
\Ext_\MHS^1(\R,M) = W_0 M_\C/(W_0 M_\R + F^0 W_0 M).
$$
\end{proof}

One consequence is that when $A=\R$, the base of the torsor (\ref{eqn:torsor}) is a point.

\begin{corollary}
\label{cor:real-biexts}
If $B$ and $C$ are real Hodge structures, then there are natural isomorphisms
$$
\sB_\R(B,C) \cong \Ext^1_\MHS(\R,C) \cong i C_\R^{-1,-1}.
$$
\end{corollary}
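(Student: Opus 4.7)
The plan is to combine Proposition~\ref{prop:real_MHS}(2) with the torsor structure described in (\ref{eqn:torsor}), using the fact that the relevant base vanishes for real MHS.

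First I would observe that the base of the torsor (\ref{eqn:torsor}) is trivial in the real case. Indeed, $B$ has weight $-1$ and $C$ has weight $-2$, so by part (2) of Proposition~\ref{prop:real_MHS} (with $e=0,\, k=-1$ and with $e=-1,\,k=-2$) we have
$$
\Ext^1_\MHS(\R,B) = 0 \text{ and } \Ext^1_\MHS(B,C) = 0.
$$
Consequently every framed real biextension $V$ has $V/W_{-2} \cong \R\oplus B$ canonically and $W_{-1}V \cong B\oplus C$ canonically, so the $\Ext^1_\MHS(\R,C)$-torsor (\ref{eqn:torsor}) is a torsor over a point. Pinning down the canonical split basepoint $V_0 = \R\oplus B\oplus C$, this yields a natural bijection
$$
\sB_\R(B,C) \overset{\simeq}{\longrightarrow} \Ext^1_\MHS(\R,C),
$$
sending $V_0$ to $0$.

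Next I would apply Proposition~\ref{prop:real_MHS}(2) once more, this time with $e=0$ and $k=-2$, which satisfies $k=e-2$, to identify
$$
\Ext^1_\MHS(\R,C) \cong i\Hom(\R,C)_\R^{-1,-1} = iC_\R^{-1,-1}.
$$
Composing the two natural isomorphisms gives the statement. There is no serious obstacle here: the entire corollary is a bookkeeping consequence of Proposition~\ref{prop:real_MHS}(2), which produces enough Ext-vanishing in the real category to collapse the two-step extension problem defining $\sB_\R(B,C)$ down to the single extension class in $\Ext^1_\MHS(\R,C)$. The only small subtlety worth remarking on is that the identification is canonical because the weight filtration on a real MHS is canonically split by the Deligne decomposition of part (1), which is what makes both intermediate extensions trivialize compatibly and identifies the base point with $0$.
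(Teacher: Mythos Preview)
Your proof is correct and is essentially the argument the paper intends: the sentence immediately preceding the corollary already notes that when $A=\R$ the base of the torsor (\ref{eqn:torsor}) collapses to a point, and you have simply spelled out why, invoking Proposition~\ref{prop:real_MHS}(2) at each of the three relevant Ext groups exactly as the paper implicitly does. There is nothing to add.
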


The {\em period} of an element of $\sB_\R(B,C)$ is the corresponding element of $iC_\R^{-1,-1}$. It determines the biextension and  vanishes if and only if it splits.

\subsection{How to compute the period of a real biextension}
\label{sec:recipe}

Here we give a recipe for computing the period of a real biextension. Suppose that $V$ is an $\R$ biextension with weight graded quotients $\R$, $B$ and $C$, as above.

Consider the dual biextension $V^\vee := \Hom_\R(V,\R)$. It has weight graded quotients $\R$, $B^\vee$ and $C^\vee$ of weights $0,1$ and 2, respectively. The first step is to compute a real splitting $s : C^\vee \to V^\vee/\R$ of the extension
$$
0 \to B^\vee \to V^\vee/\R \to C^\vee \to 0
$$
in the category of real MHS. This lifts to a real section
$$
\stilde_\R : C_\R^\vee \to V_\R^\vee \text{ of the quotient map } V_\R^\vee \to C_\R^\vee
$$
and a Hodge filtration preserving section
$$
\stilde_F : C_\C^\vee \to V_\C^\vee \text{ of the quotient map } V_\C^\vee \to C_\C^\vee.
$$
The first is unique up to an element of $\Hom_\R(C_\R^\vee,\R) \cong C_\R$. The second is unique up to an element of $F^0 \Hom_\C(C^\vee,\C) \cong F^0 C$. Their difference $\stilde_F - \stilde_\R$ is an element of $C_\C \cong \Hom(C_\C^\vee,\C)$ whose image in
$$
C_\C/(C_\R + F^0 C) \cong iC_\R^{-1,-1}
$$
is well defined. It is the period of the biextension.

\section{Unipotent quotients of path torsors}
\label{sec:unipt_paths}

Suppose that $X$ is a connected, smooth manifold and that $p,q\in X$. Denote the set of homotopy classes of paths in $X$ from $p$ to $q$ by $\pi_1(X;p,q)$. It is a torsor (i.e., a principal homogeneous space) under both $\pi_1(X,p)$ and $\pi_1(X,q)$, which act by pre- and post-composition, respectively. Suppose that $R$ is a commutative ring. Denote the free $R$-module generated by $\pi_1(X;p,q)$ by $R\pi_1(X;p,q)$. This is a free left $R\pi_1(X,p)$ module of rank 1. Taking each element of $\pi_1(X;p,q)$ to 1 defines an augmentation
$$
\epsilon_{p,q} : R\pi_1(X;p,q) \to R.
$$
Denote its kernel by $I_{p,q}$. This extends to a filtration
$$
R\pi_1(X;p,q) = I^0_{p,q} \supseteq I_{p,q} \supseteq I^2_{p,q} \supseteq \cdots
$$
where
$$
I^m_{p,q} := I_p^m R\pi_1(X;p,q) = I_p^{m-1} I_{p,q}\quad \text{when } m\ge 1
$$
and  $I_p$ denotes the augmentation ideal of $R\pi_1(X,p)$. Since $R\pi_1(X;p,q)$ is a free $R\pi_1(X,p)$ module of rank 1 generated by any path $\gamma$ from $p$ to $q$, there is, for each $m\ge 0$, a natural isomorphism
$$
\Gr_I^m R\pi_1(X;p,q) \cong I_p^m/I_p^{m+1}.
$$

\begin{remark}
For each $m > 0$ there is a local system over $X\times X$ whose fiber over $(p,q)$ is $R\pi_1(X;p,q)/I_{p,q}^{m+1}$. The monodromy representation
$$
\pi_1(X,p) \times \pi_1(X,q) \to \Aut \big( R\pi_1(X;p,q)/I_{p,q}^{m+1} \big)
$$
is given by $(\mu_p,\mu_q) : \gamma \to \mu_p \gamma \mu_q^{-1}$. It is filtered by the sub local systems whose fiber over $(p,q)$ is $I_{p,q}^j/I_{p,q}^{m+1}$. Each graded quotient of the local system is trivial so that the global monodromy representation is unipotent.
\end{remark}

If $R$ is a PID and $H_1(X;R)$ is torsion free, then
$$
\Gr_I^m R\pi_1(X;p,q) \cong
\begin{cases}
R & m = 0; \cr
H_1(X;R) & m = 1; \cr
\coker\{H_2(X;R) \overset{\delta_X}{\to} H_1(X;R)^{\otimes 2}\} & m = 2,
\end{cases}
$$
where $\delta_X$ is induced by the diagonal map $X \to X\times X$. (See \cite{stallings} or \cite[Lem.~6.2]{hain:bowdoin}.) It is the dual of the cup product map when $H_1(X;R)$ is finitely generated. More generally, each $\Gr_I^m R\pi_1(X;p,q)$ is a quotient of $H_1(X;R)^{\otimes m}$. So if $H_1(X;R)$ is finitely generated, then $R\pi_1(X;p,q)/I_{p,q}^{m+1}$ is finitely generated for all $m \ge 0$.

\section{A de~Rham theorem}
\label{sec:de_rham}

Assume now that $H_1(X;\R)$ is finite dimensional so that $\R\pi_1(X;p,q)/I_{p,q}^{m+1}$ is finite dimensional. General results of K.-T.~Chen \cite{chen} imply that its dual space
$$
\Hom_\R(\R\pi_1(X;p,q)/I_{p,q}^{m+1},\R)
$$
can be computed using differential forms. We now recall briefly how this works in the case $m=2$. Basic references for this material are \cite{chen} and \cite{hain:bowdoin}.

Denote the space of piecewise smooth paths in $X$ from $p$ to $q$ by $P_{p,q}X$. Let $\kk$ denote $\R$ or $\C$. A $\kk$-valued iterated integral is an expression of the form
\begin{equation}
\label{eqn:it-int}
F = c + \int \xi + \sum c_{jk} \int \varphi_j \varphi_k
\end{equation}
where $\xi$ and each $\varphi_j$ are smooth 1-forms on $X$ and $c,c_{jk}\in \kk$. The iterated integral $F$ is regarded as the function $F : P_{p,q} X \to \kk$ that takes the value
$$
F(\gamma) = c + \int_\gamma \xi + \sum c_{jk} \int_\gamma \varphi_j \varphi_k
$$
on $\gamma$, where
$$
\int_\gamma \varphi_j \varphi_k := \int_{0\le t_1 \le t_2 \le 1} f_j(t_1) f_k(t_2)  dt_1 dt_2
$$
and $\gamma^\ast \varphi_j = f_j(t) dt$.

Such an iterated integral is said to be {\em relatively closed} (or a {\em homotopy functional}) if its value on a path depends only on its homotopy class relative to its endpoints.

\begin{lemma}
\label{lem:closed}
If each $\varphi_j$ is closed, the iterated integral (\ref{eqn:it-int}) is relatively closed if and only if
\begin{equation}
\label{eqn:integrability}
d\xi + \sum c_{jk}\,\varphi_j\wedge \varphi_k = 0.
\end{equation}
\end{lemma}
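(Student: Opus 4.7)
The plan is to derive a first-variation formula for $F$ along smooth homotopies $H: [0,1]^2 \to X$ with $H(\cdot, 0) = \gamma_0$, $H(\cdot, 1) = \gamma_1$, and constant boundary behaviour $H(0, s) \equiv p$, $H(1, s) \equiv q$. Writing $\gamma_s(t) := H(t, s)$, I aim to establish
$$\frac{d}{ds} F(\gamma_s) \;=\; -\int_0^1 H^*\Bigl(d\xi + \sum c_{jk}\, \varphi_j \wedge \varphi_k\Bigr)(\partial_t, \partial_s)\, dt.$$
Both directions of the lemma follow quickly from this identity.

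To derive the formula, decompose $H^*\varphi_j = a_j\, dt + b_j\, ds$, so that $\gamma_s^*\varphi_j = a_j(t, s)\, dt$ and the length-two iterated integral becomes the explicit double integral $\int_{0 \le t_1 \le t_2 \le 1} a_j(t_1, s)\, a_k(t_2, s)\, dt_1\, dt_2$. Differentiating under the integral sign produces two terms involving $\partial_s a_j$ and $\partial_s a_k$; since each $\varphi_j$ is closed, $\partial_s a_j = \partial_t b_j$, so I integrate by parts in the inner variable. The boundary contributions at $t = 0$ and $t = 1$ vanish because $H(0, s)$ and $H(1, s)$ are constant in $s$, forcing $b_j(0, s) = b_j(1, s) = 0$. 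The remaining terms telescope to $-\int_0^1 (a_j b_k - b_j a_k)\, dt = -\int_0^1 H^*(\varphi_j \wedge \varphi_k)(\partial_t, \partial_s)\, dt$. The analogous, simpler Stokes calculation for $\int \xi$ contributes the $d\xi$ piece and assembles the displayed formula.

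If (\ref{eqn:integrability}) holds, the right-hand side vanishes identically, so $F(\gamma_s)$ is constant along every rel-endpoint homotopy and $F$ is a homotopy functional. Conversely, suppose $\omega := d\xi + \sum c_{jk}\, \varphi_j \wedge \varphi_k$ is nonzero at some point $x \in X$ on tangent vectors $(u, v)$. I choose a smooth path $\gamma_0$ from $p$ to $q$ meeting $x$ at some interior time $t_0$ with $\dot\gamma_0(t_0) = u$, and build a variation $H(t, s) := \gamma_0(t) + s\,\phi(t)\, v$ in a coordinate chart around $x$, with $\phi$ a nonnegative bump supported near $t_0$. The pointwise integrand on the right-hand side then reduces to $\phi(t)\,\omega_{\gamma_0(t)}(\dot\gamma_0(t), v)$; by continuity of $\omega$ and $\dot\gamma_0$, on a sufficiently small support this has constant sign and integrates to a nonzero value, contradicting $\frac{d}{ds} F(\gamma_s) = 0$. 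The main obstacle is purely bookkeeping---keeping track of signs and the boundary terms in the integration-by-parts step; once the variation formula is secured, both implications are routine.
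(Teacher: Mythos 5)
Your first-variation formula and both directions of the argument are correct, and this is essentially the standard proof that the paper defers to by citation (it is the computation of \cite[Prop.~3.1]{hain:bowdoin}, equivalent to working on the universal cover). The signs, the use of closedness to write $\partial_s a_j=\partial_t b_j$, and the vanishing of the boundary terms from the fixed endpoints all check out.
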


This is well known and can be proved by working on the universal covering of $X$. For another proof, see \cite[Prop.~3.1]{hain:bowdoin}.

Denote the space of relatively closed $\kk$-valued iterated integrals of the form (\ref{eqn:it-int}) by $\Ch_2(X;p,q)_\kk$. It has a natural filtration\footnote{Although it may appear that this filtration may have a natural splitting by tensor degree, it does not. For example if $h$ is a smooth function on $X$ and $\varphi$ a 1-form, then
$$
\int df = (f(q)-f(p))\epsilon_{p,q} \text{ and } \int dh\, \varphi = \int (h\varphi) - h(p)\int \varphi.
$$
This will be important in the proof of the main result.}
\begin{equation}
\label{eqn:filt}
\Ch_2(X;p,q)_\kk \supseteq \Ch_1(X;p,q)_\kk \supseteq \Ch_0(X;p,q)_\kk,
\end{equation}
where $\Ch_1(X;p,q)_\kk$ consists of the relatively closed iterated integrals
$$
c + \int \xi
$$
of length $\le 1$ (i.e., $d\xi = 0$) and let $\Ch_0(X)_\kk = \kk\epsilon_{p.q}$. The iterated integrals (\ref{eqn:it-int}) satisfying (\ref{eqn:integrability}) span $\Ch_2(X;p,q)_\kk$.

\begin{proposition}
\label{prop:chen-dr}
Integration defines an isomorphism
$$
\Ch_2(X;p,q)_\kk \to \Hom_\kk(\kk \pi_1(X;p,q)/I_{p,q}^3,\kk).
$$
The filtration (\ref{eqn:filt}) is dual to the filtration
$$
\kk\pi_1(X;p,q)/I_{p,q}^3 \supseteq I_{p,q}(X;p,q)/I_{p,q}^3 \supseteq I_{p,q}^2(X;p,q)/I_{p,q}^3 \supseteq 0
$$
of $\kk\pi_1(X;p,q)/I_{p,q}^3$.
\end{proposition}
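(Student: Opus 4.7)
The plan is to prove that integration is a filtered isomorphism and to deduce the result from the five lemma. The first step is to verify that the map is well-defined on $\kk\pi_1(X;p,q)/I_{p,q}^3$ and respects both filtrations. This rests on the iterated-integral composition formula
$$
\int_{\alpha\beta}\varphi_1\varphi_2 = \int_\alpha \varphi_1\varphi_2 + \left(\int_\alpha \varphi_1\right)\left(\int_\beta \varphi_2\right) + \int_\beta \varphi_1\varphi_2
$$
together with $\int_{\alpha\beta}\varphi = \int_\alpha\varphi + \int_\beta \varphi$. Expanding an element of $I_{p,q}^3$ as a sum of products $(\gamma_1 - 1)(\gamma_2 - 1)(\gamma_3 - 1)\gamma$ with $\gamma_i \in \pi_1(X,p)$ and $\gamma \in \pi_1(X;p,q)$, and applying these identities, one checks directly that any relatively closed iterated integral of length $\le 2$ evaluates to $0$ on such a product; the same combinatorics shows that $\Ch_r$ lands in the annihilator of $I_{p,q}^{r+1}$.

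Next I would prove bijectivity on each associated graded piece, from which the full statement follows by the five lemma. In degree $0$ both sides are $\kk$ and the map sends $\epsilon_{p,q}\mapsto \epsilon_{p,q}$. In degree $1$ the identification
$$
\Gr^1\Ch_2(X;p,q)_\kk \overset{\simeq}{\longrightarrow} \Hom_\kk(I_{p,q}/I_{p,q}^2,\kk) \cong \Hom_\kk(H_1(X;\kk),\kk)
$$
is the usual de Rham isomorphism $H^1_{\mathrm{dR}}(X;\kk) \cong H^1(X;\kk)$, using that the shift $\xi \mapsto \xi + df$ only changes the integral by a multiple of $\epsilon_{p,q}$.

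The essential case is degree $2$. By Lemma~\ref{lem:closed}, an element of $\Gr^2\Ch_2(X;p,q)_\kk$ is represented by $\sum c_{jk}\int\varphi_j\varphi_k$ with each $\varphi_j$ closed and $\sum c_{jk}\varphi_j\wedge\varphi_k$ exact; modifications of the length-$1$ correction $\xi$ are absorbed in $\Ch_1$. Hence $\Gr^2\Ch_2$ is identified with the kernel of the cup product $\cup\colon H^1(X;\kk)^{\otimes 2}\to H^2(X;\kk)$. On the other side, $I_{p,q}^2/I_{p,q}^3 \cong I_p^2/I_p^3$ is the cokernel of $\delta_X\colon H_2(X;\kk)\to H_1(X;\kk)^{\otimes 2}$, so its dual is precisely $\ker(\cup)$. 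To confirm that integration realizes this identification, I would evaluate $\sum c_{jk}\int\varphi_j\varphi_k$ on a commutator $[\alpha,\beta]\in I_p^2$ via the composition formula and recover the expected pairing with $[\alpha]\otimes[\beta] - [\beta]\otimes[\alpha]$ in $H_1(X;\kk)^{\otimes 2}$.

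The main obstacle is the surjectivity step in degree $2$: given a class in $\ker(\cup)$ represented by $\sum c_{jk}[\varphi_j]\otimes[\varphi_k]$ with $\sum c_{jk}[\varphi_j]\cup[\varphi_k] = 0$, one must produce a smooth $1$-form $\xi$ solving (\ref{eqn:integrability}). Such a $\xi$ exists because $\sum c_{jk}\varphi_j\wedge\varphi_k$ is closed with vanishing de Rham class, and the resulting iterated integral is relatively closed by Lemma~\ref{lem:closed} and pairs correctly with the chosen functional. The duality of the filtrations asserted in the proposition is immediate from this construction.
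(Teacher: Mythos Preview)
The paper does not give its own proof of this proposition: it is stated as a de~Rham theorem and attributed to Chen, with \cite{chen} and \cite{hain:bowdoin} as references. So there is nothing in the paper to compare your argument against beyond the surrounding remarks.

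Your strategy---check that integration is well defined on $\kk\pi_1(X;p,q)/I_{p,q}^3$ and filtered, then verify bijectivity on each $\Gr$ via the usual de~Rham theorem in degree~1 and the $\ker(\cup)$/$\coker(\delta_X)$ duality in degree~2---is the standard one and is essentially what one finds spelled out in those references. One small correction: in degree~2 you should pair with products $(\alpha-1)(\beta-1)$ (or $(\alpha-1)(\beta-1)\gamma$ in the torsor version), not with commutators. The composition formula gives
\[
\Big(\sum c_{jk}\int\varphi_j\varphi_k\Big)\big((\alpha-1)(\beta-1)\big)=\sum c_{jk}\Big(\int_\alpha\varphi_j\Big)\Big(\int_\beta\varphi_k\Big),
\]
which is exactly the pairing with $[\alpha]\otimes[\beta]$ in $H_1(X;\kk)^{\otimes 2}$. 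Commutators only see the antisymmetric part $[\alpha]\otimes[\beta]-[\beta]\otimes[\alpha]$, and since $I_p^2/I_p^3$ surjects onto all of $H_1^{\otimes 2}/\im\delta_X$ (not just its alternating quotient), testing on commutators alone is not enough to establish injectivity. With that adjustment your argument goes through.
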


\begin{proof}
The case $p=q$ is a special case of Chen's $\pi_1$ de~Rham theorem. (See Cor.~1 of \cite[Thm.~2.6.1]{chen} or \cite[Thm.~4.4]{hain:bowdoin}.) The case when $p\neq q$ follows from this via \cite[\S3]{hain-zucker}. Indeed, the groups $\Ch_2(X;p,q)_\kk$ form a flat vector bundle over $X\times X$. Integration induces a map from the sheaf of flat sections of this vector bundle to the locally constant sheaf over $X\times X$ whose fiber over $(p,q)$ is $\Hom_\kk(\kk \pi_1(X;p,q)/I_{p,q}^3,\kk)$. Since the restriction of this map to the diagonal (or even just one point on the diagonal) is an isomorphism, it is an isomorphism on all fibers.
\end{proof}

There is a well defined mapping $\kappa : \Ch_2(X;p,q)_\kk \to H^1(X;\kk)^{\otimes 2}$. With the notation of (\ref{eqn:it-int}), it is defined by
\begin{equation}
\label{eqn:kappa}
\kappa : c + \int \xi + \sum c_{jk} \int \varphi_j \varphi_k \mapsto \sum c_{jk} [\varphi_j]\otimes[\varphi_k].
\end{equation}
The following is a direct consequence of Lemma~\ref{lem:closed} and the definition of $\Ch_1(X;p,q)$.

\begin{lemma}
\label{lem:exact}
The sequence
$$
0 \to \Ch_1(X;p,q)_\kk \to \Ch_2(X;p,q)_\kk \overset{\kappa}{\to} \ker\{H^1(X;\kk)^{\otimes 2} \overset{\text{cup}}{\longrightarrow} H^2(X;\kk)\} \to 0
$$
is exact.
\end{lemma}

We can decompose $H^1(X;\kk)^{\otimes 2}$ into its symmetric and skew-symmetric parts:
$$
H^1(X;\kk)^{\otimes 2} = S^2 H^1(X;\kk) \oplus \Lambda^2 H^1(X;\kk).
$$
This restricts to the decomposition
\begin{multline}
\label{eqn:decomp_K}
\ker\{H^1(X;\kk)^{\otimes 2} \overset{\text{cup}}{\longrightarrow} H^2(X;\kk)\} 
\cr
= S^2 H^1(X;\kk) \oplus \ker\{\Lambda^2 H^1(X;\kk) \overset{\text{cup}}{\longrightarrow} H^2(X;\kk)\}.
\end{multline}
The element $[\varphi]\otimes [\psi] + [\psi]\otimes [\varphi]$ of $S^2 H^1(X;\kk)$ lifts to the ``decomposable'' iterated integral
$$
\int \varphi \int \psi = \int \varphi\psi + \int \psi\varphi.
$$

\section{The Hodge biextension associated to a path torsor}
\label{sec:hodge_path}

\begin{theorem}[\cite{hain-zucker}]
If $X$ is a quasi-projective manifold and $p\in X$, then, for all $m > 0$, the local system over $X$ with fiber $\Z\pi_1(X;p,q)/I_{p,q}^m$ over $q\in X$ underlies an admissible unipotent variation of $\Z$-MHS over $X$. For all $j\ge 0$, the weight filtration on each fiber satisfies
$$
W_{-j} \Q\pi_1(X;p,q)/I_{p,q}^m \supseteq I_{p,q}^j/I_{p,q}^m
$$
with equality if and only if $H^1(X)$ is pure of weight 1.
\end{theorem}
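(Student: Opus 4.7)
The plan is to produce the mixed Hodge structure fiberwise via Chen's iterated integrals applied to the logarithmic de~Rham complex, then promote this to an admissible variation in $q$, and finally compare the $W$- and $I$-adic filtrations by inspecting weights on associated graded pieces.

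First I would work fiberwise. Fix a smooth compactification $\overline{X}$ with simple normal crossings boundary $D$. Deligne's theory endows $\Omega^\bullet_{\overline{X}}(\log D)$ with a weight filtration (counting log poles) and the stupid Hodge filtration, and these induce the MHS on $H^\bullet(X)$. The reduced bar complex $B(\Omega^\bullet_{\overline{X}}(\log D))$ with shuffle product inherits multiplicative $F$ and $W$: a bar word $[\omega_1|\cdots|\omega_k]$ is placed in weight $-k$ and shifted further by the logarithmic weights of its entries, while $F$ is defined termwise. By Proposition~\ref{prop:chen-dr}, the length-$\le m$ truncation of this complex computes $\Hom_\C(\C\pi_1(X;p,q)/I_{p,q}^{m+1},\C)$. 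Transporting $F$ and $W$ across this identification and dualizing gives filtrations on $\C\pi_1(X;p,q)/I_{p,q}^{m+1}$; the $\Z$-structure is tautological. That the result is a genuine $\Z$-MHS is Hain's bar-construction theorem, together with rationality provided by the group ring.

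Next I would upgrade this to a variation as $q$ varies in $X$. The underlying $\Z$-local system is the sheaf of truncated group rings, manifestly unipotent since its $I$-adic graded quotients are constant sheaves (tensor powers of $H_1(X;\Z)$). Because iterated integration against smooth forms depends holomorphically on the endpoint, the Hodge filtration varies holomorphically, and the weight filtration arises from a sheaf-level version of the same bar construction on $X\times X$ projected to the second factor. The \emph{main obstacle}, and the delicate step, is admissibility along each boundary stratum of $\overline{X}$: one must check that as $q$ approaches $D$, the limits of the Hodge and weight filtrations form a nilpotent orbit satisfying the Kashiwara--Steenbrink--Zucker relative weight filtration condition. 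The natural framework is tangential base points: on a holomorphic disc transverse to $D$, the renormalized iterated integrals converge, local monodromy acts as left multiplication by an element of $\C\pi_1(X,p)/I_p^{m+1}$ whose logarithm is nilpotent on the standard fiber, and the relative weight filtration is inherited from the ambient bar construction by construction. This is essentially the content of \cite{hain-zucker}, which I would invoke directly rather than reprove.

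For the weight-filtration comparison, I would argue by induction on $j$. Exactness of $\Gr^W$ reduces the inclusion $W_{-j}\supseteq I_{p,q}^j/I_{p,q}^m$ to showing that each graded piece $I_{p,q}^k/I_{p,q}^{k+1}$, as a subquotient of $H_1(X)^{\otimes k}$, has all weights $\le -k$. Since for quasi-projective $X$ the MHS $H_1(X)$ has weights in $\{-1,-2\}$, the tensor power has weights in $[-2k,-k]$, giving the bound. For the sharp statement: if $H^1(X)$ is pure of weight $1$, then $H_1(X)$ is pure of weight $-1$, each $I_{p,q}^k/I_{p,q}^{k+1}$ is pure of weight $-k$, and a straightforward induction on $j$ shows $W_{-j}=I_{p,q}^j/I_{p,q}^m$ for all $j$. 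Conversely, if $H^1(X)$ has a nontrivial weight-$2$ piece, then $H_1(X)$ has a weight-$(-2)$ summand lying inside $I_{p,q}/I_{p,q}^2$ but obviously not in $I_{p,q}^2$; this summand sits in $W_{-2}$, so the inclusion is already strict at $j=2$.
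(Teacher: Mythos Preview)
Your outline is correct and matches the construction the paper describes. Note, however, that the paper does not itself prove this theorem: it is attributed to \cite{hain-zucker}, and the paper only \emph{reviews} the construction in the special case $m=2$ (Sections~\ref{sec:hodge_path}.1--\ref{sec:hodge_path}.2), defining the Hodge and weight filtrations on $W_2\Ch_2(X;p,q)$ via iterated integrals of log forms on $(\Xbar,D)$ in exactly the manner you sketch. Your arguments for admissibility and for the comparison of the $W$- and $I$-adic filtrations go beyond what the paper supplies, but they are the standard ones and agree with the cited source.
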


This section is a review of the construction of the canonical $\Z$-MHS on $\Z\pi_1(X;p,q)/W_{-3}$. An exposition of the construction of the MHS on $\Z\pi_1(X;p,q)/I_{p,q}^m$ when $p=q$ can be found in \cite{hain:bowdoin} and complete details in \cite{hain:dht,hain-zucker}. We do this by constructing an $\R$-MHS on its dual, $W_2\Ch_2(X;p,q)$. We first recall the construction in the projective case as it is simpler.

\subsection{The projective case}

In this section $X$ is a projective manifold. The weight filtration
$$
0 \subseteq W_0 \Ch_2(X;p,q) \subseteq W_1 \Ch_2(X;p,q) \subseteq W_2 \Ch_2(X;p,q) = \Ch_2(X;p,q)
$$
is defined by $W_j \Ch_2(X;p,q) = \Ch_j(X;p,q)$. It is dual to the weight filtration of $\kk\pi_1(X;p,q)/I_{p,q}^3$, which is defined by
$$
W_{-j} \kk\pi_1(X;p,q)/I_{p,q}^3 = I_{p,q}^j/I_{p,q}^3,\quad 0 \le j \le 3.
$$
Denote the complex of smooth $\C$-valued forms on $X$ by $E^\bdot(X)$. The Hodge filtration
$$
\Ch_2(X;p,q)_\C = F^0 \Ch_2(X;p,q) \supseteq F^1 \Ch_2(X;p,q) \supseteq F^2 \Ch_2(X;p,q) \supseteq 0
$$
is defined by insisting that $F^s\Ch_2(X;p,q)$ is spanned by the relatively closed iterated integrals
$$
c \epsilon_{p,q} + \int \xi + \sum c_{jk} \int \varphi_j \varphi_k,
$$
where each $\varphi_j$ is closed, $\xi \in F^s E^1(X)$ and
$$
\sum c_{jk}\, \varphi_j \otimes \varphi_k \in F^s\big(E^1(X)\otimes E^1(X)\big).
$$
We can and will assume that each $\varphi_j$ is holomorphic or anti holomorphic. Note that the map $\kappa$ defined in (\ref{eqn:kappa}) is a morphism of MHS.

\subsection{The general case}
\label{sec:gen_hodge}

Now assume that $X$ is a quasi-projective manifold. Let $\Xbar$ be a smooth projective completion of $X$ such that $X = \Xbar-D$, where $D$ is a divisor with normal crossings. Denote the $C^\infty$ log complex of $(\Xbar,D)$ by $E^\bdot(\Xbar\log D)$.

Suppose that $\kk$ is a subfield of $\R$. In the category of $\kk$-Hodge structures we have isomorphisms
$$
\Gr^W_{-1} \kk \pi_1(X;p,q)/W_{-3} \cong \Gr^W_{-1} H_1(X;\kk) \cong  H_1(\Xbar;\kk)
$$
and an exact sequence \cite[Lem.~6.1]{hain:bowdoin}
\begin{equation}
\label{eqn:extension}
0 \to H_1(\Xbar;\kk)^{\otimes 2}/\im\deltabar_X \to \Gr^W_{-2} \kk\pi_1(X;p,q)/W_{-3} \to \Gr^W_{-2}H_1(X;\kk) \to 0
\end{equation}
where $\deltabar_X$ is the morphism of MHS defined by the commutative diagram:
$$
\begin{tikzcd}
H_2(X;\kk) \ar[r,"\delta_X"] \ar[d] \ar[dr,"\deltabar_X"] & H_1(X;\kk)^{\otimes 2} \ar[d] \\
H_2(\Xbar;\kk) \ar[r,"\delta_\Xbar"] & H_1(\Xbar;\kk)^{\otimes 2}
\end{tikzcd}
$$
Note that, for all topological spaces $Z$, the image of $\delta_Z$ is contained in the subspace $\Lambda^2 H_1(Z;\kk)$ of $H_1(Z;\kk)^{\otimes 2}$. In particular, this holds when $Z$ is $X$ and $\Xbar$.

The dual of $\kk\pi_1(X;p,q)/W_{-3}$ is the subspace $W_2\Ch_2(X;p,q)$ of $\Ch_2(X;p,q)$ spanned by the relatively closed iterated integrals of the form
$$
c + \int \xi + \sum c_{jk} \int \varphi_j \varphi_k,
$$
where $\xi \in E^1(\Xbar\log D)$ and each $\varphi_j$ or its complex conjugate is a holomorphic 1-form on $\Xbar$. Its weight filtration is defined by $W_0 \Ch_2(X;p,q) = \C \epsilon_{p,q}$,
$$
W_1 \Ch_2(X;p,q) = \big\{c + \int \varphi : \varphi \text{ or its complex conjugate } \in H^0(\Xbar,\Omega_\Xbar^1)\big\}.
$$
The $s$th term $F^sW_2\Ch_2(X;p,q)$ of the Hodge filtration of $W_2\Ch_2(X;p,q)$ is the span of the relatively closed iterated integrals
$$
c + \int \xi + \sum c_{jk} \int \varphi_j \varphi_k,
$$
where each $\varphi_j$ or its complex conjugate is in $H^0(\Xbar,\Omega_\Xbar^1)$, $\xi \in F^s E^1(\Xbar\log D)$ and
$$
\sum c_{jk}\, \varphi_j \otimes \varphi_k \in F^s\big(E^1(\Xbar\log D)\otimes E^1(\Xbar\log D)\big).
$$
The map $\kappa$ and the sequence in Lemma~\ref{lem:exact} are morphisms of MHS.

Set
\begin{equation}
\label{eqn:def_K}
K_\kk = \ker\{H^1(\Xbar;\kk)^{\otimes 2} \overset{j^\ast\circ\text{cup}}{\longrightarrow} H^2(X;\kk)\},
\end{equation}
where $j : X \hookrightarrow \Xbar$ is the inclusion. It is a Hodge structure of weight 2 and is isomorphic to the dual of $H_1(\Xbar;\kk)^{\otimes 2}/\im\deltabar_X$, where $\deltabar_X$ is the map defined above. Taking $\Gr^W_2$ of (\ref{eqn:decomp_K}), we see that
\begin{equation}
\label{eqn:decomposition}
K_\kk = S^2 H^1(\Xbar;\kk) \oplus \big(K_\kk\cap \Lambda^2 H^1(\Xbar;\kk)\big).
\end{equation}
The dual of (\ref{eqn:extension}) is the exact sequence
\begin{equation}
\label{eqn:dual}
0 \to \Gr^W_2 H^1(X;\kk) \to \Gr^W_2\Ch_2(X;p,q)_\kk \to K_\kk \to 0
\end{equation}
of Hodge structures. The last map is $\Gr^W_2\kappa$. The sequence can also be obtained by taking $\Gr^W_2$ of the exact sequence in Lemma~\ref{lem:exact}.

\begin{remark}
\label{rem:real_split}
When $p=q$, the real mixed Hodge structure on $\Z\pi_1(X;p,q)/W_{-3}$ splits. This is because
$$
\Z\pi_1(X,p)/W_{-3} = \Z \oplus I_p/W_{-3}
$$
in the category of $\Z$-MHS and because the real MHS on $I_p/W_{-3}$ splits by Proposition~\ref{prop:real_MHS}.
\end{remark}

\section{The period of the MHS on $\R\pi_1(X;p,q)/W_{-3}$}
\label{sec:period}

Throughout this section, $\kk$ will denote $\R$ or $\C$. We'll compute the period using the recipe in Section~\ref{sec:recipe}. The dual of the biextension is $W_2\Ch_2(X;p,q)$. As in the previous section, we will consider the projective case first as it is simpler and contains all of the main ideas.

\subsection{The projective case}

For all $m\ge 0$, integration defines an isomorphism
$$
\Ch_m(X;p,q)_\kk/\Ch_0(X;p,q)_\kk \overset{\simeq}{\longrightarrow} \Hom_\kk(I_{p,q}/I_{p,q}^{m+1}, \kk).
$$

In the projective case, $K_\kk$ is the kernel of the cup product $H^1(X;\kk)^{\otimes 2} \to H^2(X;\kk)$. Our first task is to compute the unique splitting
$$
s : K \to \Ch_2(X;p,q)/\Ch_0(X;p,q)
$$
of the surjection $\Ch_2(X;p,q)/\Ch_0(X;p,q) \to K$ in the category of real MHS as well as real and Hodge filtration preserving lifts
$$
\stilde_\R : K \to \Ch_2(X;p,q)_\R \text{ and } \stilde_F : K_\C \to \Ch_2(X;p,q)_\C
$$
of $s$. Their difference $\stilde_F - \stilde_\R$ then determines the period.

Recall the notation of Proposition~\ref{prop:real_MHS}. In particular, $K=K(1,1) \oplus K(2,0)$.

\subsubsection{Computation of the sections over $K(2,0)$}
\label{sec:K20}

We first construct the sections over $K^{2,0}$. Fix a basis $\{\w_j\}$ of $H^0(\Omega^1_X)$. Since $H^0(\Omega_X^2) \cong H^{2,0}(X)$, a holomorphic 2-form on $X$ is exact if and only if it vanishes. This means that
$$
\sum a_{jk} \omega_j \otimes \omega_k \in H^0(\Omega_X)^{\otimes 2},\quad a_{jk} \in \C
$$
represents an element of $K^{2,0}$ if and only if
$$
\sum a_{jk} \omega_j \wedge \omega_k = 0 \in H^0(\Omega_X^2).
$$
By Lemma~\ref{lem:closed}, the iterated integral
$$
\sum a_{jk} \int \omega_j \omega_k
$$
is relatively closed and determines an element of $\Ch_2(X;p,q)$. This defines a section
$$
\stilde^{2,0} : K^{2,0} \to F^2\Ch_2(X;p,q)
$$
of $F^2\Ch_2(X;p,q) \to K^{2,0}$. Its complex conjugate is a section
$$
\stilde^{0,2} : K^{0,2} \to F^0\Ch_2(X;p,q).
$$
Their sum defines a morphism of real mixed Hodge structures $\stilde : K(2,0) \to \Ch_2(X;p,q)$.

\subsubsection{Computation of the sections over $K(1,1)$}
\label{sec:compn}

It suffices to define the sections on $K_\R^{1,1}$. In view of (\ref{eqn:decomposition}), a representative of each element of $K_\R^{1,1}$ can be decomposed as a sum
$$
\sum h_{jk}\, (\w_j \otimes \wbar_k - \wbar_k \otimes \w_j) + \sum a_{jk} (\w_j \otimes \wbar_k + \wbar_k \otimes \w_j)
$$
where $\hbar_{jk} = - h_{kj}$ and $\overline{a}_{jk} = a_{kj}$. The first term is an element of $K_\R^{1,1} \cap \Lambda^2 H^1(X;\R)$ and the second of $[S^2 H^1(X)]_\R^{1,1}$. We compute the sections on the two terms separately.

Define $\stilde$ on elements of the second type by defining
\begin{align}
\label{eqn:decomp}
\stilde\big(\sum a_{jk} (\w_j \otimes \wbar_k + \wbar_k \otimes \w_j)\big)
&= \sum a_{jk} \int (\w_j \wbar_k + \wbar_k \w_j) \cr
&= \sum a_{jk} \int \w_j \int \wbar_k.
\end{align}
This lies in $F^1\Ch_2(X;p,q) \cap \Ch_2(X;p,q)_\R$. So on $[S^2 H^1(X)]_\R^{1,1}$, we can define the value of $\stilde_\R$ and $\stilde_F$ to be the value of $\stilde$.

The interesting part of the period is supported on $K_\R^{1,1} \cap \Lambda^2 H^1(X;\R)$. Suppose that
\begin{equation}
\label{eqn:omega}
\Omegatilde = \sum h_{jk}\, (\w_j \otimes \wbar_k - \wbar_k \otimes \w_j) \in K_\R^{1,1} \cap \Lambda^2 H^1(X;\R).
\end{equation}
Its image in $E^2(X)$ is $2\Omega$, where $\Omega = \sum h_{jk}\, \w_j \wedge \wbar_k$. By the $\del\delbar$-lemma, there is a smooth function $f : X \to \C$ such that
$$
\Omega = i\del\delbar f.
$$
Since $\Omega$ is real analytic, the Cartan--K\"ahler Theorem \cite[Thm.~2.2]{eds} implies that $f$ is also real analytic. Since $\Omega$ and $i\del\delbar$ are real, we can (and will) take $f$ to be real. It is unique up to a real constant. Set
$$
\xi = i\del f \text{ and } \varphi = \xi + \xibar = i(\del f - \delbar f).
$$
Then $\xi$ is a $(1,0)$-form and $\varphi$ a real 1-form that satisfy
$$
\Omega + \delbar \xi = 0,\ 2\Omega + d\varphi = 0 \text{ and } 2\xi-\varphi = i df.
$$
Define
$$
\stilde_F(\Omegatilde) := 2\int \xi + \sum h_{jk} \int (\w_j \wbar_k - \wbar_k \w_j) \in F^1 \Ch_2(X;p,q)
$$
and
$$
\stilde_\R(\Omegatilde) := \int \varphi + \sum h_{jk} \int (\w_j \wbar_k - \wbar_k \w_j) \in \Ch_2(X;p,q)_\R.
$$
These satisfy
$$
\stilde_F(\Omegatilde) = \stilde_\R(\Omegatilde) + i(f(q)-f(p))\epsilon_{p,q} \in \Ch_2(X;p,q)_\R
$$
so that $\stilde_F \equiv \stilde_\R \bmod \Ch_0(X;p,q)$. Together with the section (\ref{eqn:decomp}), they define a morphism of real mixed Hodge structures
$$
s : K(1,1) \to \Ch_2(X;p,q)/\Ch_0(X;p,q)
$$
as well as two lifts
$$
\stilde_F : K(1,1) \to F^1\Ch_2(X;p,q) \text{ and } \stilde_\R : K(1,1)_\R \to \Ch_2(X;p,q)_\R
$$
of it. Their difference vanishes on $[S^2 H^1(X)]_\R^{1,1}$.

\subsubsection{Computation of the period}

Recall that the image of $\delta_X$ lies in $\Lambda^2 H_1(X;\kk)$. The decomposition
$$
H_1(X;\kk)^{\otimes 2}/\im \delta_X \cong S^2 H_1(X;\kk) \oplus (\Lambda^2 H_1(X;\kk)/\im \delta_X)
$$
is dual to (\ref{eqn:decomposition}). Since the restriction of the sections $\stilde_F$ and $\stilde_\R$ to $S^2 H^1(X)$ are equal, we have:

\begin{proposition}
\label{prop:image}
If $X$ is a projective manifold, the image of the period mapping
$$
\Psi : X\times X \to i[H_1(X)^{\otimes 2}/\im \delta_X]_\R^{-1,-1}
$$
is contained in $i[\Lambda^2 H_1(X)/\im\delta_X]_\R^{-1,-1}$.
\end{proposition}

Choose a basis $\{\Omegatilde_j\}$ of $[\Lambda^2 H^1(X)]_\R^{1,1} \cap K_\R$ and real-valued real functions $f_j : X \to \R$ such that $i\del\delbar f_j = \Omega_j$. Each $f_j$ is real analytic and unique up to a constant. Denote the dual basis of $[\Lambda^2 H_1(X)/\im \delta_X]_\R^{-1,-1}$ by $\{\bkappa_j\}$. Define
$$
\Psi : X\times X \to i[\Lambda^2 H_1(X)/\im \delta_X]_\R^{-1,-1}
$$
by $\Psi(p,q) = i\sum_j \big(f_j(q)-f_j(p)\big) \bkappa_j$. It vanishes on the diagonal.

\begin{proposition}
The function $\Psi$ is the period mapping of the real biextension over $X^2$ whose fiber over $(p,q)$ is $\R\pi_1(X;p,q)/I_{p,q}^3$. It is real analytic.
\end{proposition}

\begin{proof}
Apply the recipe given in Section~\ref{sec:recipe} using the computations in Sections~\ref{sec:K20} and \ref{sec:compn}. Analicity follows from the fact that each $f_j$ is real analytic.
\end{proof}

\subsection{The general case}

In this section we give the period computation in the general case. Some details similar to those in the projective case above are left to the reader. We will use the setup and notation of Section~\ref{sec:gen_hodge}. In particular, $X=\Xbar-D$, where $\Xbar$ is a projective manifold and $D$ is a divisor with normal crossings.

Denote the normalization of $D$ by $\Dtilde$. The Gysin sequence
$$
0 \to H^1(\Xbar) \to H^1(X) \overset{\Res}{\to} H_0(\Dtilde)(-1) \to H^2(\Xbar) \to H^2(X) \to \cdots
$$
is an exact sequence of MHS. This implies that $W_1H^1(X) = H^1(\Xbar)$ (as noted above) and that
$$
\Gr^W_2 H^1(X) = \ker\{H_0(\Dtilde)(-1) \to H^2(\Xbar)\}.
$$

\subsubsection{The Hodge structure $M_X$}

For $p,q\in X$, define $M_X$ via the pullback diagram
$$
\begin{tikzcd}
M_X \ar[r]\ar[d] & \Gr^W_{-2} \Q\pi_1(X;p,q) \ar[d,twoheadrightarrow] \\
(\Lambda^2 H_1(\Xbar;\Q))/\im\delta_\Xbar \ar[r,hookrightarrow] & \Gr^W_{-2} \Q\pi_1(\Xbar;p,q).
\end{tikzcd}
$$
It is a Hodge structure of weight $-2$ and does not depend on $(p,q)\in X^2$. It is an extension
\begin{equation}
\label{eqn:ext_M}
0 \to \Lambda^2 H_2(\Xbar;\Q)/\im\deltabar_X \to M_X \to \Gr^W_{-2}H_1(X;\Q) \to 0.
\end{equation}

The next result is the generalization of Proposition~\ref{prop:image} to the case when $X$ is not necessarily projective.

\begin{proposition}
\label{prop:image_general}
The image of the period map
$$
\Psi : X^2 \to i[\Gr^W_{-2} \R\pi_1(X;p,q)]^{-1,-1}
$$
is contained in $i[M_X]^{-1,-1}_\R$.
\end{proposition}

Henceforth, we will regard the period mapping $\Psi$ as a map $\Psi : X^2 \to i[M_X]^{-1,-1}_\R$.

\begin{proof}
Set $M_\Xbar = \Lambda^2 H_1(\Xbar;\Q)/\im \delta_\Xbar$. By Proposition~\ref{prop:image}, the image of the period mapping
$$
\Psi_\Xbar : \Xbar^2 \to i[\Gr^W_{-2} \R\pi_1(\Xbar;p,q)]^{-1,-1}
$$
of $\Xbar$ lies in $i[M_\Xbar]_\R^{-1,-1}$. Since the diagram
$$
\begin{tikzcd}
X^2 \ar[rr,"\Psi"] \ar[d] && i{[\Gr^W_{-2} \R\pi_1(X,p)]^{-1,-1}} \ar[d]\\
\Xbar^2 \ar[r] \ar[rr,bend right,"\Psi_\Xbar"] & i{[M_\Xbar]_\R^{-1,-1}} \ar[r,hookrightarrow] & i{[\Gr^W_{-2} \R\pi_1(\Xbar;p,q)]^{-1-1}}
\end{tikzcd}
$$
commutes, the image of $\Psi$ is contained in $i[M_X]^{-1,-1}_\R$, as claimed.

\end{proof}

\subsubsection{Computation of the period map}

Suppose that $Z$ is a subvariety of $\Xbar$ of dimension $r$. Denote by $\bdelta_{[Z]}$ the de~Rham current on $\Xbar$ that takes a smooth $2r$-form $\eta$ on $\Xbar$ to its integral $\int_{Z'}\eta$ over the smooth locus of $Z$. This definition extends to algebraic cycles with coefficients in $\C$ by linearity.

We use the notation (\ref{eqn:omega}) for elements of $K\cap \Lambda^2 H^1(\Xbar)$ and the images in $E^2(\Xbar)$ of their harmonic representatives. The dual of (\ref{eqn:ext_M}) is the exact sequence
$$
0 \to \Gr^W_2 H^1(X) \to M_X^\vee \to K\cap \Lambda^2 H^1(\Xbar;\R) \to 0.
$$
Let $\{\Omegatilde_a\}$ be a basis of $K_\R^{1,1}\cap \Lambda^2 H^1(\Xbar)$ and $\{\bkappa_a\}$ be the dual basis of $[\Lambda^2 H_1(\Xbar)/\im \deltabar_X]_\R^{-1,-1}$. The exactness of the sequence above implies that, for each $a$, we can choose an element $E_a$ of $H_0(\Dtilde;\R)$ such that $\Omega_a$ represents the Poincar\'e dual of $E_a$. Since $\Omega_a$ and $E_a$ are real, there is a 0-current $f_a$ on $\Xbar$ of logarithmic type\footnote{See \cite[\S1.3]{gillet-soule} for background on currents.} that is a smooth real-valued function away from the support of $E_a$ and satisfies
$$
i\del\delbar f_a = \Omega_a - \bdelta_{[E_a]}.
$$
It is real analytic and well defined up to the addition of a real constant. The restriction of $f_a$ to $X$ does not depend on the choice of compactification $\Xbar$ as can be seen by adapting the proof \cite[p.~37]{deligne:h2} of \cite[Thm.~3.2.5(ii)]{deligne:h2}.

Note that $\del f_a \in F^1E^1(\Xbar\log D)$. Write
$$
\Omegatilde_a = \sum h_{jk}^\round{a}  (\w_j \otimes \wbar_k - \wbar_k \otimes \w_j) \ \in [K \cap \Lambda^2 H^1(\Xbar)]_\R^{1,1}.
$$
It lifts to the relatively closed iterated integral
$$
2\int i\del f_a + \sum h_{jk}^\round{a} \int (\w_j \wbar_k - \wbar_k \w_j) \in F^1W_2\Ch_2(X;p,q)
$$
and
$$
\int i(\del f_a - \delbar f_a) + \sum h_{jk}^\round{a} \int (\w_j \wbar_k - \wbar_k \w_j) \in W_2\Ch_2(X;p,q)_\R.
$$
Since $2\del f_a - (\del f_a - \delbar f_a) = df_a$, these differ by
$$
\int_p^q idf_a = i\big(f_a(q)-f_a(p)\big)\e_{p,q} \in \Ch_0(X;p,q),
$$
so that their images in $\Ch_2(X;p,q)/\Ch_0(X;p,q)$ are equal. Denote their common image in $\Gr^W_2\Ch_2(X;p,q)$ by $I_{\Omegatilde_a}$.

Since the real Hodge structure on $H^1(X)$ splits, there is a linearly independent subset $\{\zeta_k\}$ of $H^0(\Omega_\Xbar^1(\log D))$ that projects to a basis of $\Gr^W_2 H^1(X)$, where each $\zeta_k$ has real periods. The set $\{\zeta_k,I_{\Omegatilde_a}\}$ is a basis of $[M_X^\vee]_\R^{1,1}$. The set $\{\bkappa_a\}$ can be built up to the dual basis $\{\ee_k,\bkappa_a\}$ of $[M_X]_\R^{-1,-1}$.

\begin{proposition}
\label{prop:period}
The period mapping
$$
\Psi : X \times X \to i[M_X]_\R^{-1,-1}
$$
of the biextension over $X^2$ with fiber $\R\pi_1(X;p,q)/I^3$ over $(p,q)\in X^2$ is
$$
\Psi(p,q) = i\Big(\sum_k \Im \int_p^q \zeta_k \,\ee_k + \sum_a \big(f_a(q)-f_a(p)\big) \, \bkappa_a\Big).
$$
It is real analytic.
\end{proposition}

\subsection{Affine curves}
\label{sec:affine}

Here we elaborate on the period computation in the case of affine curves as it plays a crucial role in the proof of Theorem~\ref{thm:main}.

Suppose that $X = \Xbar-D$, where $\Xbar$ is a smooth projective curve and $D=\{x_0,\dots,x_m\}$ is a non-empty finite subset. Fix $p\in X$.

Since $H^2(X)=0$, the Gysin sequence implies that
$$
\Gr^W_1 H^1(X) \cong H^1(\Xbar) \text{ and } \Gr^W_2 H^1(X) \cong \Htilde_0(D).
$$
For each $\Omegatilde \in [\Lambda^2 H^1(\Xbar)]_\R^{1,1}$ there is a real-valued function $f$ on $\Xbar-\{x_0\}$ whose restriction to a neighbourhood of $x_0$ in $\Xbar$ satisfies
$$
f = h - \frac{1}{2\pi} \bigg(\int_\Xbar \Omega\bigg) \log |z|^2,
$$
where $h$ is smooth and $z$ is a local holomorphic coordinate about $x_0$ and,  when regarded as a current on $\Xbar$,
\begin{equation}
\label{eqn:def_f}
i\del\delbar f = \Omega - \bigg(\int_\Xbar \Omega\bigg) \bdelta_{[x_0]}.
\end{equation}
The function $f$ is real harmonic and unique up to a real constant. For $p\in X$, we normalize $f$ by setting $f_\Omega = f - f(p)$.

For each $k = 1,\dots,m$, there is a {\em unique} differential of the third kind $\zeta_k$ with real periods that is holomorphic except at $x_0$ and $x_k$ where it has simple poles with residues $-(2\pi i)^{-1}$ and $(2\pi i)^{-1}$, respectively. Their images in $\Gr^W_2 H^1(X)$ form a basis of it. Define $h_k : X \to \R$ by
$$
h_k(q) = \Im \int_p^q \zeta_k.
$$
It is a well defined real analytic function as the periods of $\zeta_k$ are real.

Choose a basis $\{\Omegatilde_a\}$ of $[\Lambda^2 H^1(\Xbar)]_\R^{1,1}$ and let $I_{\Omegatilde_a}$ be the lift of $\Omegatilde_a$ to $\Gr^W_2 \Ch_2(X;p,q)_\R$ defined in the previous section. Then $\{\zeta_k,I_{\Omegatilde_a}\}$ is a basis of $[M_X^\vee]_\R^{1,1}$. Let $\{\ee_k,\bkappa_a\}$ be the dual basis of $[M_X]_\R^{-1,-1}$.

With this notation, the period mapping $\Psi_p : X \to i[M_X]_\R^{-1,-1}$ of the biextension with fiber $\R\pi_1(X;p,q)/W_{-3}$ over $q\in X$ is the real analytic function $\Psi_p$ defined by
$$
\Psi_p(q) = i\Big(\sum_k h_k(q)\,\ee_k + \sum_a f_{\Omega_a}(q) \, \bkappa_a\Big).
$$

\section{Unipotent completion}
\label{sec:unipt}

This section is a terse review of unipotent completion and the associated Hodge theory. Details can be found in \cite[\S2.5]{hain:dht}. Additional background can be found in \cite[App.~A]{quillen}. See \cite{hain:bowdoin} for a leisurely exposition.

Suppose that $\G$ is a discrete group and that $\kk$ is a field of characteristic zero. The group algebra $\kk\G$ is a cocommutative Hopf algebra. The diagonal $\Delta : \kk\G \to \kk\G \otimes \kk\G$ is defined by $\Delta : \gamma \to \gamma\otimes\gamma$ for all $\gamma \in \G$. The powers of its augmentation ideal $I$ define a topology on $\kk\G$. Its $I$-adic completion is the complete algebra
$$
\kk\G^\wedge := \varprojlim_n \kk\G/I^n.
$$
The diagonal $\Delta$ is continuous in the $I$-adic topology and induces a diagonal
$$
\Delta : \kk\G^\wedge \to \kk\G^\wedge \widehat{\otimes} \kk\G^\wedge.
$$
This gives $\kk\G^\wedge$ the structure of a cocommutative {\em complete} Hopf algebra.

The map $\G \to I/I^2$ that takes $\gamma\in \G$ to $(\gamma-1) + I^2$ is a group homomorphism. It induces an isomorphism $H_1(\G,\kk) \to I/I^2$.

Now suppose that $H_1(\G,\Q)$ is finite dimensional. The {\em unipotent completion} (also called the {\em Malcev completion}) $\G^\un$ of $\G$ is a prounipotent group defined over $\Q$. Its set of $\kk$ rational points is the set of group-like elements of $\kk\G^\wedge$:
$$
\G^\un(\kk) = \{u\in \kk\G^\wedge : \Delta u = u\otimes u\}.
$$
Its Lie algebra $\p$ is the set of primitive elements of $\Q\G^\wedge$:
$$
\p = \{x \in \Q\G^\wedge: \Delta x = 1\otimes x + x \otimes 1\}.
$$
This is a pronilpotent Lie algebra that is contained in the completed augmentation ideal $\hat{I}$. Its lower central series is
$$
\p \supseteq \p^2 \supseteq \p^3 \supseteq \cdots
$$
where $\p^n := \p \cap \hat{I}^m$. Each $\p/\p^m$ is a finite dimensional nilpotent Lie algebra. There are natural isomorphisms
$$
H_1(\p) := \p/\p^2 \cong H_1(\G,\Q) \text{ and } \p^2/\p^3 \cong [\Lambda^2 H_1(\G,\Q)]/\im\delta
$$
such that the diagram
$$
\begin{tikzcd}
\Lambda^2 \p  \ar[d] \ar[r,"{[\blank,\blank]}"] & \p^2 \ar[d] \\
\Lambda^2 H_1(\G,\Q) \ar[r] & {[\Lambda^2 H_1(\G,\Q)]}/\im\delta
\end{tikzcd}
$$
commutes, where $\delta : H_2(\G,\Q) \to \Lambda^2 H_1(\G,\Q)$ is the dual of the cup product.

\begin{theorem}[{\cite[Thm.~6.3.1]{hain:dht}}]
Suppose that $X$ is a complex algebraic variety and that $p\in X$. The Lie algebra $\p(X,p)$ of the unipotent completion of $\pi_1(X,p)$ has a natural pro-MHS whose bracket is a morphism. The canonical isomorphisms
$$
H_1(\p(X,p)) \cong H_1(X;\Q) \text{ and } \p(X,p)^2/\p(X,p)^3 \cong [\Lambda^2 H_1(X)]/\im \delta_X
$$
are isomorphisms of MHS.
\end{theorem}

When $X$ is a quasi-projective manifold, $W_{-m} \p(X,p) \subseteq \p(X,p)^m$ with equality if and only if $H^1(X)$ is a Hodge structure of weight 1.

\begin{lemma}
\label{lem:Gr_p}
If $X$ is a quasi-projective manifold, then $\Gr^W_{-2} \p(X,p)= 0$ if and only if $\Gr^W_{-2}H_1(X) = 0$ and $\deltabar_X : H_2(X) \to \Lambda^2 H_1(\Xbar)$ is surjective.
\end{lemma}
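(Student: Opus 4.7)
The plan is to produce the explicit direct sum decomposition
$$
\Gr^W_{-2}\p(X,x) \;\cong\; \Gr^W_{-2} H_1(X) \,\oplus\, \Lambda^2 H_1(\Xbar)/\im \deltabar_X,
$$
from which the biconditional is immediate: the left-hand side vanishes iff both summands vanish, i.e., iff $\Gr^W_{-2} H_1(X)=0$ and $\deltabar_X$ is surjective.

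First I would pass to associated gradeds. Taking $\Gr^W$ of the inclusion $\p(X,x)/W_{-3} \hookrightarrow \Q\pi_1(X,x)/W_{-3}$ produces a graded cocommutative Hopf algebra $\Gr^W \Q\pi_1(X,x)/W_{-3}$ whose Lie algebra of primitives, by Milnor--Moore applied to the completion $\Q\pi_1(X,x)^{\wedge} \cong U(\p(X,x))$ and subsequent truncation, is $\Gr^W(\p(X,x)/W_{-3})$. Thus the problem reduces to computing the weight $-2$ primitives of the graded Hopf algebra $\Gr^W \Q\pi_1(X,x)/W_{-3}$.

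The exact sequence (\ref{eqn:extension}) (taken with $p=q=x$) is an extension of pure Hodge structures of weight $-2$, and hence splits canonically:
$$
\Gr^W_{-2}\Q\pi_1(X,x)/W_{-3} \;\cong\; H_1(\Xbar)^{\otimes 2}/\im \deltabar_X \,\oplus\, \Gr^W_{-2} H_1(X).
$$
Elements of the second summand $\Gr^W_{-2} H_1(X) \subseteq \Gr^W_{-2}(I/I^2)$ are indecomposable and therefore primitive. For the first summand, a direct coproduct computation using $\Delta(\gamma)=\gamma\otimes\gamma$ shows that, for $g_i=\gamma_i-1$ with $\gamma_i \in \pi_1(X,x)$ lifting elements of $\Gr^W_{-1}H_1(X)=H_1(\Xbar)$,
$$
\Gr^W_{-2}\Delta(g_1 g_2) = g_1 g_2 \otimes 1 + g_1 \otimes g_2 + g_2 \otimes g_1 + 1 \otimes g_1 g_2,
$$
the missing ``$g_1 g_2 \otimes g_i$''-type terms lying in the weight $-3$ part of the tensor product and therefore contributing nothing to $\Gr^W_{-2}(A\otimes A)$. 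Consequently $g_1 g_2$ is primitive in $\Gr^W_{-2}$ precisely when $g_1\otimes g_2 + g_2\otimes g_1 = 0$, i.e., when its class in $H_1(\Xbar)^{\otimes 2}$ is antisymmetric. Since cup product on $H^1(\Xbar)$ is anti-commutative, $\deltabar_X$ factors through $\Lambda^2 H_1(\Xbar)$, so
$$
H_1(\Xbar)^{\otimes 2}/\im \deltabar_X \;=\; S^2 H_1(\Xbar) \,\oplus\, \Lambda^2 H_1(\Xbar)/\im \deltabar_X,
$$
and its primitive subspace is exactly $\Lambda^2 H_1(\Xbar)/\im \deltabar_X$.

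Combining the contributions yields the displayed decomposition, from which the lemma follows. The step requiring the most care is the weight-graded coproduct calculation together with the Milnor--Moore identification of primitives after truncation; everything else is formal bookkeeping with the split exact sequence (\ref{eqn:extension}).
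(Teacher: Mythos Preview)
Your argument is correct and follows essentially the same route as the paper: both use the exact sequence (\ref{eqn:extension}) and restrict it to $\p(X,x)$, obtaining $0 \to \Lambda^2 H_1(\Xbar)/\im\deltabar_X \to \Gr^W_{-2}\p(X,x) \to \Gr^W_{-2}H_1(X) \to 0$, from which the biconditional is immediate. The paper simply asserts this restricted sequence in one line, whereas you supply the missing justification---the Milnor--Moore identification of $\Gr^W\p$ with the primitives of $\Gr^W\Q\pi_1$ and the coproduct computation showing that the primitive part of $H_1(\Xbar)^{\otimes 2}/\im\deltabar_X$ is exactly the antisymmetric part---so your version is a fleshed-out form of the paper's.
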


\begin{proof}
This result follows from the exactness of the sequence
$$
0 \to \Lambda^2 H_1(\Xbar;\Q)/\im \deltabar_X \to \Gr^W_{-2}\p(X,x) \to \Gr^W_{-2} H_1(X;\Q) \to 0
$$
which is obtained by restriction from (\ref{eqn:extension}) with $p=q$.
\end{proof}

The only quasi-projective curves with $\Gr^W_{-2}\p(X,p)=0$ are $\P^1, \C$ and elliptic curves.

\begin{proposition}
If $X$ is a quasi-projective manifold, then $M_X = \Gr^W_{-2}\p(X,p)$.
\end{proposition}

\begin{proof}
Denote the completion of the augmentation ideal of $\Q\pi_1(X,p)$ by $\Ihat$. The projection
$$
\Ihat \to \Ihat/\Ihat^2 \cong I/I^2 \cong H_1(X;\Q)
$$
is a morphism of Hodge structures, It induces a surjection $\Gr^W_{-2}\Q\pi_1(X,p)^\wedge \to \Gr^W_{-2} H_1(X;\Q)$. Both $M_X$ and $\Gr^W_{-2}\p(X,p)$ surject onto $\Gr^W_{-2} H_1(X;\Q)$. The kernel of both projections is $\Lambda^2 H_1(X;\Q)/\im\deltabar_X$. It follows that $M_X = \Gr^W_{-2}\p(X,p)$.
\end{proof}

\section{Unipotent variations of MHS}
\label{sec:vmhs}

In this section, we recall some basic facts about unipotent variations of MHS that we will need in the next section. A basic reference for this section is \cite{hain-zucker}. Throughout $X$ will be a quasi-projective manifold.

Suppose $A$ is a noetherian subring of $\R$ and $\kk$ its fraction field. Suppose that $\bV$ is an admissible unipotent variation of $A$-MHS over $X$.\footnote{These are called ``good'' unipotent variations of MHS in \cite{hain-zucker}. When $X$ is projective, the only condition that admissibility imposes is Griffiths transversality.} The unipotence of $\bV$ is equivalent to the condition that each $\Gr^W_m \bV$ is a constant variation of Hodge structure.

For each $(p,q)\in X^2$, there is a monodromy action
$$
A\pi_1(X;p,q) \to \Hom(V_p,V_q)
$$
defined by parallel transport. Since the monodromy is unipotent, there is a positive integer $m$ such that the kernel contains $I_{p,q}^{m+1}$. We thus have an $A$-module homomorphism
\begin{equation}
\label{eqn:monod}
\rho_{p,q} : A\pi_1(X;p,q)/I_{p,q}^{m+1} \to \Hom_A(V_p,V_q).
\end{equation}
The following is well known, at least to experts. It is a consequence of the main theorem of \cite{hain-zucker}.

\begin{lemma}
\label{lem:morphism}
For all $(p,q)\in X^2$, the monodromy homomorphism (\ref{eqn:monod}) is a morphism of $A$-MHS.
\end{lemma}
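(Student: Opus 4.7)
The plan is to realize $\rho_{p,q}$ as the fiber at $(p,q)$ of a morphism of admissible variations of MHS over $X^2$. Set $\bV_i := \pi_i^*\bV$ on $X^2$ for $i=1,2$ and let $\sH := \Hom_A(\bV_1,\bV_2)$, an admissible variation of $A$-MHS with fiber $\Hom(V_p,V_q)$ over $(p,q)$. By the main theorem of \cite{hain-zucker}, the local system $\mathbf{P}$ over $X^2$ with fiber $A\pi_1(X;p,q)/I_{p,q}^{m+1}$ underlies an admissible unipotent variation of $A$-MHS. Parallel transport yields a morphism of underlying local systems $R : \mathbf{P} \to \sH$, and the lemma is equivalent to showing that $R$ is a morphism of variations of MHS, since evaluation at $(p,q)$ recovers $\rho_{p,q}$.

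For the weight filtration, on $\mathbf{P}$ the Hain-Zucker filtration satisfies $I^j_{p,q}/I^{m+1}_{p,q} \subseteq W_{-j}\mathbf{P}$, while $W_\bullet \sH$ is the convolution of the filtrations on $\bV_1^\vee$ and $\bV_2$. Unipotence of $\bV$ makes each $\Gr^W_\bullet \bV$ a constant variation, so parallel transport $T_\gamma : V_p \to V_q$ preserves $W_\bullet$ and induces the canonical isomorphism on $\Gr^W$. Hence for $\alpha \in I_{p,q}$ the endomorphism $R(\alpha)$ vanishes on $\Gr^W$, sending $W_k V_p$ into $W_{k-1}V_q$; induction on $j$ via $I^{j+1}_{p,q} = I_p \cdot I^j_{p,q}$ gives $R(I^j_{p,q}/I^{m+1}_{p,q}) \subseteq W_{-j}\sH$, and the full compatibility with $W_\bullet$ follows.

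The main obstacle is Hodge-filtration compatibility, $R(F^0\mathbf{P}_\C)\subseteq F^0\sH_\C$. My natural tool is Chen's iterated-integral formula for parallel transport. After choosing a $C^\infty$ trivialization of $\bV$ adapted to the Hodge filtration, the connection takes the form $\nabla = d + \omega$ with $\omega \in E^1(X)\otimes \End V_p$ taking values in a nilpotent subalgebra, and for a path $\gamma$ from $p$ to $q$,
$$
T_\gamma \;=\; \id + \int_\gamma \omega + \int_\gamma \omega\,\omega + \cdots + \int_\gamma \underbrace{\omega \cdots \omega}_{m}.
$$
Admissibility of $\bV$---Griffiths transversality together with the existence of a logarithmic extension to $\Xbar$---permits $\omega$ to be chosen with components in $F^{-1}\End V_p \otimes E^1(\Xbar\log D)$. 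This places the Chen series in the subspace of iterated integrals describing $F^0 \Ch_m(X;p,q)$ in Section~\ref{sec:hodge_path}, so that pairing a class in $F^0 \mathbf{P}_\C$ against $T_\gamma$ lands in $F^0\Hom(V_p,V_q)$.

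A more economical endgame uses rigidity: a morphism of admissible variations of MHS over a connected base is determined by its fiber at one point, so it suffices to verify $F$-compatibility of $R$ at $(p,p)$, where $R_{(p,p)}$ is the monodromy representation $A\pi_1(X,p)/I_p^{m+1} \to \End V_p$. This reduces the lemma to showing that this action at a single base point is a morphism of MHS---a statement essentially built into Hain-Zucker's iterated-integral construction of the MHS on $A\pi_1(X,p)/I_p^{m+1}$ and the de~Rham presentation of the MHS on $\End V_p$. The hard step either way is the matching of the Hodge filtrations via Griffiths transversality of $\nabla$.
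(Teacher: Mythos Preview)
Your ``economical endgame'' is exactly the paper's proof: fix $p$, regard both sides as admissible unipotent variations over $X$ (the paper works over $X$ rather than $X^2$, but this is immaterial), and invoke the main theorem of \cite{hain-zucker} twice---once to say that the monodromy representation $A\pi_1(X,p)/I_p^{m+1}\to\End V_p$ at the basepoint is a morphism of MHS, and once (via the equivalence of categories) to promote a horizontal map that is an MHS morphism at one fiber to a morphism of variations. The paper dispenses entirely with your explicit weight-filtration check and the Chen-series computation; both are subsumed by that second invocation.

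One small remark on the explicit weight argument: your induction only gives $R(I_{p,q}^j)\subseteq W_{-j}\Hom(V_p,V_q)$, but when $X$ is not projective $W_{-j}\mathbf{P}$ strictly contains $I_{p,q}^j/I_{p,q}^{m+1}$, so ``full compatibility with $W_\bullet$'' does not follow from that alone. This gap is harmless, since the rigidity argument handles $W$ and $F$ simultaneously.
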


\begin{proof}
Fix $p\in X$. The homomorphism (\ref{eqn:monod}) is a morphism from the local system over $X$ with fiber $A\pi_1(X;p,q)/I_{p,q}^{m+1}$ over $q$ to the local system with fiber $\Hom_A(V_p,V_q)$ over $q$. Both underlie admissible unipotent variations of $A$-MHS over $X$. When $q=p$, the homomorphism is the monodromy representation of $\bV$. It is a morphism of MHS by the main result of \cite{hain-zucker}, which also implies that the map of local systems is a morphism of variations of MHS.
\end{proof}

When $p=q$, the monodromy action (\ref{eqn:monod}) restricts to the infinitesimal monodromy action
\begin{equation}
\label{eqn:inf_monod}
\p(X,p)/\p(X,p)^{m+1} \to \End V_p.
\end{equation}

\section{Non-degeneracy of unipotent path torsors}
\label{sec:non-degen}

The key step in the proof of Theorem~\ref{thm:main} is to prove the following non-degeneracy result, which is prove in this section.

\begin{theorem}
\label{thm:non-degeneracy}
If $X$ is a quasi-projective manifold, then for all $p\in X$ the period mapping $\Psi_p : X \to i[\Gr^W_{-2}\p(X,p)]_\R^{-1,-1}$ of the variation with fiber $\R\pi_1(X;p,q)/W_{-3}$ over $q \in X$ is non-degenerate in the sense that its image is not contained in any real hyperplane in $i[\Gr^W_{-2}\p(X,p)]_\R^{-1,-1}$.
\end{theorem}

There is nothing to prove when $\Gr^W_{-2}\p(X,p) = 0$. So we will suppose that $\Gr^W_{-2}\p(X,p)\neq 0$. We first consider the case where $X$ is a curve. The general case will follow from the curve case and the Lefschetz hyperplane theorem.

\subsection{The curve case}

\begin{lemma}
\label{lem:dependent}
Suppose that $m,n\ge 0$ and that $f_1,\dots,f_n, h_1,\dots,h_m$ are holomorphic functions on the unit disk. If
\begin{equation}
\label{eqn:equal}
\sum_{j=1}^n |f_j|^2 = \sum_{k=1}^m |h_k|^2,
\end{equation}
then $\{f_1,\dots,f_n,h_1,\dots,h_m\}$ is linearly dependent.
\end{lemma}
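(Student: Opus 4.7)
The plan is to reformulate the hypothesis as the vanishing of a certain two-variable kernel on the ``diagonal'' $w=z$ and then polarize to a global identity, out of which a nontrivial linear relation falls. Concretely, I would introduce
$$
F(z,w) \;:=\; \sum_{j=1}^n f_j(z)\,\overline{f_j(w)} \;-\; \sum_{k=1}^m h_k(z)\,\overline{h_k(w)},
$$
which is holomorphic in $z$ and antiholomorphic in $w$ on $\bD\times\bD$. Hypothesis (\ref{eqn:equal}) is precisely the statement that $F(z,z)=0$ for all $z\in\bD$.

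The crux is to promote this to $F\equiv 0$ on $\bD\times\bD$. Rather than invoking an identity principle for totally real submanifolds, I would argue by explicit double Taylor coefficients: writing $f_j(z)=\sum_r a_{j,r}z^r$ and $h_k(z)=\sum_r b_{k,r}z^r$, one has the convergent expansion $F(z,w)=\sum_{r,s\ge 0} C_{r,s}\, z^r\bar w^s$ with
$$
C_{r,s} \;=\; \sum_{j=1}^n a_{j,r}\,\overline{a_{j,s}} \;-\; \sum_{k=1}^m b_{k,r}\,\overline{b_{k,s}}.
$$
Specializing $w=z$ gives $\sum_{r,s}C_{r,s}z^r\bar z^s\equiv 0$ on $\bD$, and uniqueness of real-analytic expansions forces every $C_{r,s}$ to vanish; hence $F\equiv 0$.

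To finish, I would pick any $w_0\in\bD$ at which not all of $f_1(w_0),\dots,f_n(w_0),h_1(w_0),\dots,h_m(w_0)$ vanish. If no such $w_0$ exists, then every $f_j$ and $h_k$ is identically zero and the desired linear dependence is trivial. Otherwise, the identity $F(z,w_0)=0$ reads
$$
\sum_{j=1}^n \overline{f_j(w_0)}\, f_j(z) \;-\; \sum_{k=1}^m \overline{h_k(w_0)}\, h_k(z) \;=\; 0,
$$
a nontrivial $\C$-linear relation among $\{f_1,\dots,f_n,h_1,\dots,h_m\}$. The only delicate point in the whole argument is the polarization step $F(z,z)\equiv 0\Rightarrow F\equiv 0$, and handling it by direct coefficient comparison keeps the proof entirely elementary and self-contained.
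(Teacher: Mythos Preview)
Your argument is correct and is essentially the same as the paper's proof. Both expand in power series and use uniqueness of the real-analytic expansion in $z,\bar z$ to extract a linear relation; the paper groups only by powers of $t$ and reads off, for each $r$, the identity $\sum_j \overline{a_r^{(j)}}\,f_j - \sum_k \overline{b_r^{(k)}}\,h_k = 0$ directly, while you package the same computation as the polarization $F(z,z)=0\Rightarrow F\equiv 0$ and then specialize at a point $w_0$ rather than at a single Taylor coefficient.
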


\begin{proof}
Denote the holomorphic coordinate in the disk by $t$. Suppose that
$$
f_j(t) = \sum_{r=0}^\infty a_r^\round{j} t^r \text{ and } h_k(t) = \sum_{r=0}^\infty b_r^\round{k} t^r.
$$
The power series expansion of $|f_j(t)|^2$ and $|h_k(t)|^2$ in terms of $t$ and $\tbar$ converge absolutely and are unique. Since
\begin{align*}
\sum_{j=1}^n |f_j(t)|^2 - \sum_{k=1}^m |h_k(t)|^2
&= \sum_{j=1}^n f_j(t)\overline{f_j(t)} - \sum_{k=1}^m h_k(t)\overline{h_k(t)} \cr
&= \sum_{r=0}^\infty \Big(\sum_{j=1}^n a_r^\round{j} \overline{f_j(t)} - \sum_{k=1}^m b_r^\round{k} \overline{h_k(t)}\Big)t^r,
\end{align*}
the uniqueness of the power series expansions and the vanishing of (\ref{eqn:equal}) implies that, for each $r$, we have
$$
\sum_{j=1}^n a_r^\round{j} \overline{f_j(t)} - \sum_{k=1}^m b_r^\round{k} \overline{h_k(t)} = 0.
$$
If all of the $a_r^\round{j}$ and $b_r^\round{k}$ are zero, all of the functions are identically zero and are thus linearly dependent. If any $a_r^\round{j}$ or $b_r^\round{k}$ is non-zero, then, after taking complex conjugates, we obtain a non-trivial linear relation between $f_1,\dots,f_n,h_1,\dots,h_m$.
\end{proof}

Suppose that $\Xbar$ is a compact Riemann surface of genus $g > 0$ and that $X=\Xbar-D$, where $D$ is finite. Fix a basis $\w_1,\dots,\w_g$ of $H^0(\Omega_\Xbar^1)$. Every element of $[\Lambda^2 H^1(\Xbar)]_\R^{1,1}$ is represented by an element of $E^1(\Xbar)_\C^{\otimes 2}$ of the form
$$
\Omegatilde = \sum h_{jk}\, (\w_j \otimes \wbar_k - \wbar_k \otimes \w_j)
$$
where the matrix $(h_{jk})$ is skew hermitian. The image of $\Omegatilde$ in $E^2(X)$ is $2 \Omega$, where
$$
\Omega = \sum h_{jk}\, \w_j \wedge \wbar_k.
$$

\begin{lemma}
\label{lem:restriction}
The differential 2-form $\Omega$ vanishes on $X$ if and only if $\Omegatilde$ vanishes.
\end{lemma}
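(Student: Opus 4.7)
The backward direction is immediate: if $\Omegatilde = 0$ in $\Lambda^2 H^1(\Xbar)$, then each $h_{jk}=0$, since the elements $\w_j \wedge \wbar_k$ are linearly independent in $\Lambda^2 H^1(\Xbar)_\C$; thus $\Omega$ vanishes identically. For the converse, $X$ is dense in $\Xbar$ and $\Omega$ is smooth on $\Xbar$, so $\Omega|_X = 0$ forces $\Omega = 0$ on all of $\Xbar$, and the task is to deduce from this that every $h_{jk}$ vanishes.

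My plan is to diagonalize. Since $(h_{jk})$ is skew-Hermitian, the matrix $H := -i(h_{jk})$ is Hermitian, so there is a unitary matrix $(U_{jl})$ with $U^\ast H U = \diag(d_1,\dots,d_g)$, $d_l\in\R$. Setting $\eta_l := \sum_j U_{jl}\,\w_j$ gives a new basis of $H^0(\Omega^1_\Xbar)$, and direct substitution yields
$$
\Omega \;=\; i\sum_{l=1}^g d_l\, \eta_l \wedge \bar\eta_l.
$$
Fix a local holomorphic coordinate $z$ on a disk $U \subset X$ and write $\eta_l = G_l(z)\,dz$. Vanishing of $\Omega$ on $U$ becomes the pointwise identity $\sum_l d_l |G_l(z)|^2 = 0$; after separating the positive and negative eigenvalues this reads
$$
\sum_{d_l>0} \bigl|\sqrt{d_l}\,G_l\bigr|^2 \;=\; \sum_{d_l<0} \bigl|\sqrt{-d_l}\,G_l\bigr|^2.
$$

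Lemma~\ref{lem:dependent} applied to the two sides produces a linear dependence over $\C$ among the holomorphic functions $\{\sqrt{|d_l|}\,G_l : d_l \neq 0\}$ on $U$. However, the $\eta_l$ are globally linearly independent on the connected surface $\Xbar$, so by the identity principle their local expressions $G_1,\dots,G_g$ are linearly independent on $U$; hence so is any subcollection. The only way to reconcile this is that the subcollection is empty, i.e.\ $d_l = 0$ for all $l$, whence $h = 0$ and $\Omegatilde = 0$. The main obstacle is the careful bookkeeping ensuring that the linear dependence produced by Lemma~\ref{lem:dependent} is genuinely nontrivial on the subset indexed by nonzero eigenvalues, so that it contradicts the independence of the $\eta_l$'s rather than collapsing vacuously.
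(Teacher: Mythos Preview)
Your proof is correct and follows essentially the same approach as the paper: diagonalize the Hermitian matrix $i(h_{jk})$, restrict to a coordinate disk, and apply Lemma~\ref{lem:dependent} to force all eigenvalues to vanish using the linear independence of the basis of holomorphic $1$-forms. The only cosmetic difference is that the paper normalizes the eigenvalues to lie in $\{0,\pm 1\}$ rather than carrying the factors $\sqrt{|d_l|}$ through.
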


\begin{proof}
It is clear that $\Omega$ vanishes when $\Omegatilde$ vanishes. So assume that $\Omega$ vanishes. Since $i(h_{jk})$ is hermitian, we can, after a change of basis, assume that it is diagonal with diagonal entries in $\{0,\pm 1\}$. In this case
$$
\Omega = i \sum_{j=1}^a \w_j \wedge \wbar_j - i \sum_{j=a+1}^{a+b} \w_j \wedge \wbar_j
$$
and
$$
\Omegatilde = \sum_{j=1}^a (\w_j \otimes \wbar_k - \wbar_k \otimes \w_j) - \sum_{j=a+1}^{a+b} (\w_j \otimes \wbar_k - \wbar_k \otimes \w_j),
$$
where $0 \le a+b\le g$. Let $U$ be a coordinate disk in $X$ with holomorphic coordinate $z$. The restriction of $\Omega$ to $U$ is
$$
\sum_{j=1}^a |f_j(z)|^2 i\, dz\wedge d\zbar - \sum_{j=a+1}^{a+b} |f_j(z)|^2 i\, dz\wedge d\zbar,
$$
where $\w_j|_U = f_j(z)dz$, which vanishes as $\Omega=0$. Since $\w_1,\dots,\w_g$ are linearly independent, so are $f_1,\dots,f_g$. Lemma~\ref{lem:dependent} implies that $a=b=0$, so that $\Omegatilde=0$.
\end{proof}

\begin{proposition}
\label{prop:curve-case}
If $X$ is a quasi-projective curve with $\Gr^W_{-2}\p(X,p) \neq 0$, then the period mapping $\Psi_p$ is non-degenerate. The vanishing locus of $\Psi_p$ is a nowhere dense real analytic subvariety of $X$.
\end{proposition}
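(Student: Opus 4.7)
My plan is to deduce both statements from the $\R$-linear independence, as real-analytic functions on $X$, of the coordinate functions $\{h_k\}$ and $\{f_{\Omega_a}\}$ appearing in the explicit formula for $\Psi_p$ recalled in Section~\ref{sec:affine}. Non-degeneracy of $\Psi_p$ is, by definition, the assertion that no nontrivial $\R$-linear functional on $[\Gr^W_{-2}\R\pi_1(X;p,q)]^{-1,-1}$ pulls back to the zero function on $X$, which is exactly linear independence of these coordinates. Once this linear independence is in hand, each coordinate is a nontrivial real analytic function on $X$, so the common vanishing locus of $\Psi_p$ is contained in the proper real analytic subvariety cut out by any one of them, and is therefore nowhere dense.

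To prove linear independence, I would assume an identity $\sum_k c_k h_k + \sum_a d_a f_{\Omega_a} \equiv 0$ on $X$ with $c_k, d_a \in \R$ and derive $c_k = d_a = 0$. The first step is to apply the operator $i\del\delbar$: the function $h_k = \Im \int_p^q \zeta_k$ is pluriharmonic on $X$ because $\zeta_k$ is holomorphic there with real periods, so $i\del\delbar h_k = 0$. The defining equation $i\del\delbar f_{\Omega_a} = \Omega_a - (\int_\Xbar \Omega_a)\bdelta_{[x_0]}$ restricts to $i\del\delbar f_{\Omega_a} = \Omega_a$ on $X$ since $x_0 \in D$ lies outside $X$. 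The identity therefore reduces to $\sum_a d_a \Omega_a = 0$ on $X$, and Lemma~\ref{lem:restriction} applied to the combined skew-hermitian matrix forces $\sum_a d_a \Omegatilde_a = 0$; since $\{\Omegatilde_a\}$ was chosen as a basis, all $d_a$ vanish.

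What remains is the relation $\sum_k c_k h_k \equiv 0$. I would differentiate and multiply by $2i$ to obtain $\sum_k c_k \zeta_k = \sum_k c_k \overline{\zeta_k}$ on $X$, an identity of a $(1,0)$-form with a $(0,1)$-form. The direct sum decomposition $E^1(X)_\C = E^{1,0}(X) \oplus E^{0,1}(X)$ then forces $\sum_k c_k \zeta_k = 0$ as an element of $H^0(\Xbar,\Omega^1_\Xbar(\log D))$, and since the $\zeta_k$ project to a basis of $\Gr^W_2 H^1(X)$, they are linearly independent, so all $c_k$ vanish as well. This completes non-degeneracy, and the statement about the vanishing locus follows as in the first paragraph.

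The main obstacle is really just the clean decoupling of the two kinds of contributions by $i\del\delbar$ together with the pullback from $X$ to $\Xbar$ via Lemma~\ref{lem:restriction}; everything else is bookkeeping. The hypothesis $\Gr^W_{-2}\p(X,p) \neq 0$ is used only via Lemma~\ref{lem:Gr_p} to guarantee that the basis $\{\ee_k, \bkappa_a\}$ is nonempty, i.e., that $X$ is not $\P^1$, $\C$, or an elliptic curve, so that at least one coordinate of $\Psi_p$ is genuinely a function on $X$ that is not identically zero.
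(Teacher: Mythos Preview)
Your argument is correct and follows essentially the same route as the paper: apply $i\del\delbar$ to kill the harmonic terms $h_k$ and isolate $\sum d_a \Omega_a = 0$ on $X$, invoke Lemma~\ref{lem:restriction} to deduce $\sum d_a \Omegatilde_a = 0$, then handle the remaining relation $\sum c_k h_k \equiv 0$ via holomorphy of the $\zeta_k$. The only cosmetic difference is that the paper packages the linear functional as a single pair $(\Omegatilde,\zeta)$ rather than expanding in the bases $\{\Omegatilde_a\}$ and $\{\zeta_k\}$, and argues the vanishing of $h = \Im\int_p^q \zeta$ forces $\zeta = 0$ directly from holomorphy rather than via the type decomposition, but this is the same idea.
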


\begin{proof}
Denote the smooth projective completion of $X$ by $\Xbar$. The computations in Section~\ref{sec:affine} and the exactness of the sequence (\ref{eqn:extension}) and its dual (\ref{eqn:dual}) imply that hyperplanes in $[\Gr^W_{-2} \p(X,p)]_\R^{-1,-1}$ are defined by non-zero pairs $(\Omegatilde,\zeta)$, where $\Omegatilde \in [K\cap \Lambda^2 H^1(\Xbar)]_\R^{1,1}$ and $\zeta$ is a differential of the 3rd kind on $\Xbar$ with real periods.

The period computation in Section~\ref{sec:affine} implies that the image of $\Psi_p$ lies in the zero locus of the function corresponding to $(\Omegatilde,\zeta)$ if and only if the function $F : X \to \R$ defined by
$$
F: q \mapsto f(q) + \Im\int_p^q \zeta
$$
vanishes on $X$, where $f$ satisfies (\ref{eqn:def_f}). To prove that $\Psi_p$ is non-degenerate, we need to prove that $F=0$ implies that $\Omegatilde$ and $\zeta$ both vanish. Suppose that $F=0$. Write $\zeta = \Re\zeta + idh$ where
$$
h(q) = \Im\int_p^q \zeta.
$$
Since $h$ is harmonic on $X$ we have
$$
i\del\delbar F = i\del\delbar h + i\del\delbar f = i\del\delbar f = \Omega \in E^2(X).
$$
Since $F=0$, this implies that $\Omega = 0$ and, by Lemma~\ref{lem:restriction}, that $\Omegatilde = 0$. Thus $F=h$. Since $\zeta$ is holomorphic, the vanishing of $h$ implies that $\zeta = 0$. The second assertion follows as $\Psi_p$ is a non-zero real analytic function on $X$.
\end{proof}

\subsection{Proof of Theorem~\ref{thm:non-degeneracy}}

It suffices to show that $\Psi_p$ is non-degenerate for all $p\in X$. We can and will assume that $X$ is a Zariski open subset of a smooth projective subvariety $\Xbar$ of $\P^N$ of dimension $>1$.

The intersection $T$ of $X$ with a generic linear subspace of $\P^N$ of codimension $\dim X-1$ that contains $p$ is a smooth curve in $X$. By the Lefschetz hyperplane theorem \cite[p.~150]{smt}, $\pi_1(T,p) \to \pi_1(X,p)$ is surjective. This implies that the induced map $\p(T,p)\to \p(X,p)$ is also surjective. Since it is a morphism of MHS,
$$
\Gr^W_{-2} \p(T,p) \to \Gr^W_{-2}\p(X,p)
$$
is surjective. And since $\kk\pi_1(T;p,q)/W_{-3} \to \kk\pi_1(X;p,q)/W_{-3}$ is a morphism of MHS for all $q\in T$, the diagram
$$
\begin{tikzcd}
T \ar[r,"\Psi_p^T"] \ar[d,hookrightarrow] & i[\Gr^W_{-2} \p(T,p)]_\R^{-1,-1} \ar[d,twoheadrightarrow] \\
X \ar[r,"\Psi_p"] & i[\Gr^W_{-2} \p(X,p)]_\R^{-1,-1}
\end{tikzcd}
$$
commutes, where the top map is the period map of the variation over $T$ with fiber $\R\pi_1(T;p,q)/W_{-3}$ over $q$. If $\Psi_p$ were degenerate, then so would $\Psi_p^T$. But Proposition~\ref{prop:curve-case} implies that $\Psi_p^T$ is non-degenerate, which implies the non-degeneracy of $\Psi_p$.

\section{Proof of Theorem~\ref{thm:main}}
\label{sec:proof}

As in the statement, $X$ is a quasi-projective manifold. Suppose that $\bV$ is a real biextension over $X$ with weight graded quotients the constant variations $B$ and $C$ as in (\ref{eqn:GrV}). To prove Theorem~\ref{thm:main}, it suffices to show that the period mapping $\Psi_\bV : X \to iC_\R^{-1,-1}$ is not identically zero.

Fix $p\in X$. Lemma~\ref{lem:morphism} implies that the monodromy homomorphism (\ref{eqn:inf_monod}) is a morphism of MHS. It induces a homomorphism of Hodge structures
$$
\Phi_p : \Gr^W_{-2} \p(X,p) \to C.
$$
If either condition in Theorem~\ref{thm:main} holds, then $\Phi_p$ is non-zero. Theorem~\ref{thm:non-degeneracy} implies that $\Phi_p$ is non-degenerate, so the real analytic map
$$
\begin{tikzcd}
X \ar[r,"\Psi_p"] & i[\Gr^W_{-2} \p(X,p)]_\R^{-1,-1} \ar[r,"\Phi_p"] & iC_\R^{-1,-1}
\end{tikzcd}
$$
cannot vanish identically. Theorem~\ref{thm:main} now follows from:

\begin{lemma}
The period map $\Psi_\bV : X \to i C_\R^{-1,-1}$ of $\bV$ is the real analytic map
$$
\Psi_\bV = \Psi_\bV(p) + \Phi_p\circ \Psi_p.
$$
\end{lemma}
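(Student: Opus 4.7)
The plan is to exploit Lemma~\ref{lem:morphism}: parallel transport assembles into a morphism of admissible unipotent variations of real MHS over $X$,
$$
\rho : \R\pi_1(X;p,\cdot)/W_{-3} \to \Hom_\R(V_p,\bV),
$$
where $V_p$ is regarded as a constant variation. On each fiber, the $\Gr^W_{-2}$-component of $\rho_{p,q}$ lands in $\Hom(\R,C)=C$, and its restriction to the sub-MHS $\Gr^W_{-2}\p(X,p)$ is the map $\Phi$ of the theorem. I would first observe that both sides of the proposed identity define real analytic maps $X\to iC_\R^{-1,-1}$: the left-hand side by the general theory of admissible variations of MHS, and the right-hand side by Proposition~\ref{prop:period} together with the $\R$-linearity of $\Phi$. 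They agree at $q=p$ (since $\Psi_p(p)=0$), so the substantive content is the pointwise identity at each $q\in X$.

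To verify this, I would use the extension-class description of the period. Write $\Pi_q:=\R\pi_1(X;p,q)/W_{-3}$. For any real MHS $E$ with $\Gr^W_0 E=\R$ and weights $0,-1,-2$, Proposition~\ref{prop:real_MHS} yields $\Ext^1_\MHS(\R,\Gr^W_{-1}E)=0$ and $W_{-1}E=\Gr^W_{-1}E\oplus\Gr^W_{-2}E$ canonically, so the class $[E]\in\Ext^1_\MHS(\R,W_{-1}E)$ is determined by its $\Gr^W_{-2}$-projection in $\Ext^1_\MHS(\R,\Gr^W_{-2}E)\cong i[\Gr^W_{-2}E]_\R^{-1,-1}$, which is the period. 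The MHS morphism
$$
m_q:\Pi_q\otimes V_p\to V_q,\quad \gamma\otimes v\mapsto \rho_{p,q}(\gamma)(v),
$$
fits into a morphism of extensions of $\R$, so by naturality of $\Ext^1$, $[V_q]$ is the image of $[\Pi_q\otimes V_p]$ under the $\Gr^W_{-2}$-component of $m_q$. The target $\Gr^W_{-2}(\Pi_q\otimes V_p)$ decomposes as $(\R\otimes C)\oplus(\Gr^W_{-1}\Pi_q\otimes\Gr^W_{-1}V_p)\oplus(\Gr^W_{-2}\Pi_q\otimes\R)$: the first summand contributes the image of $[V_p]$, i.e., $\Psi_\bV(p)$; the third contributes the image of $[\Pi_q]$, which under $m_q$ restricted to $\Gr^W_{-2}\p(X,p)$ yields $\Phi(\Psi_p(q))$; and the cross term vanishes because it would be a Künneth-type product of the $\Gr^W_{-1}$-components of $[V_p]$ and $[\Pi_q]$, each of which is zero by Proposition~\ref{prop:real_MHS}.

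The main obstacle is rigorously justifying that $[\Pi_q\otimes V_p]$ decomposes additively along these three summands with no cross contribution. This uses that $\Ext^{\ge 2}_\MHS=0$ for real MHS (so the category has cohomological dimension one), from which the class of a tensor extension in $\Ext^1_\MHS$ is the Baer sum of the ``diagonal'' contributions coming from each factor. A more concrete alternative is to apply the period recipe of Section~\ref{sec:recipe} to $V_q^\vee$ directly: construct real and Hodge splittings of $V_q^\vee/\R\to C^\vee$ from the corresponding splittings for $\Pi_q^\vee$ via the dual of $\rho_{p,q}$, composed with a splitting for $V_p^\vee$, and read off the discrepancy $\stilde_F-\stilde_\R$ as $\Psi_\bV(p)+\Phi(\Psi_p(q))$.
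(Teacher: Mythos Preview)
Your approach is correct but genuinely different from the paper's. The paper avoids the tensor product $\Pi_q\otimes V_p$ altogether: it first uses the group structure on $\sB_\R(B,C)$ (Corollary~\ref{cor:real-biexts}) to twist $\bV$ by the constant biextension with period $-\Psi_\bV(p)$, reducing to the case where $V_p$ splits; then a lift $\ee_p\in V_p$ of $1\in\R$ gives a direct MHS morphism $\Pi_q\to V_q$, $u\mapsto u\cdot\ee_p$, and functoriality of the period yields $\Psi_\bV(q)=\Phi(\Psi_p(q))$ in one step. Your route handles the constant and variable terms uniformly, at the cost of the K\"unneth bookkeeping. Your ``cross term vanishes'' step is right but for a cleaner reason than you state: since $\Ext^1_\MHS(\R,B)=0$ and $\Ext^1_\MHS(\Gr^W_{-1}\Pi_q,\Gr^W_{-2}\Pi_q)=0$, both $V_p$ and $\Pi_q$ split off their weight $-1$ pieces as real MHS, so $\Gr^W_{-1}\Pi_q\otimes B$ is a genuine direct summand of $\Pi_q\otimes V_p$ (not merely of its $\Gr^W_{-2}$) and therefore contributes nothing to the extension class of $\R$ by $W_{-1}$. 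The paper's argument is shorter; yours is more conceptual in that it makes the additivity visible without a preliminary normalization.
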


\begin{proof}
We first suppose that the MHS on $V_p$ splits. Denote the generator of $\Gr^W_0 V_p$ that corresponds to $1\in \R$ by $\ee_p$. For all $q\in X$, the homomorphism
$$
\R\pi_1(X;p,q)/W_{-3} \to V_q
$$
defined by $u \mapsto u\ee_p$ is a morphism of MHS. It follows that the period of $V_q$ is $\Phi_p\circ \Psi_p(q)$. Since the real MHS on $\R\pi_1(X,p)/W_{-3}$ splits (Remark~\ref{rem:real_split}), $\Psi_p(p) = 0$. So the formula holds in this case.

To treat the general case, recall from Corollary~\ref{cor:real-biexts} that real biextensions with weight graded quotients $\R$, $B$ and $C$ form an abelian group. So we can modify $\bV$ by twisting it by the biextension with period $-\Psi_\bV(p) \in iC_\R^{-1,-1} \cong \Ext^1_\MHS(\R,C)$. The new biextension $\bV_p$ splits over $p$. Its period mapping is $\Psi_\bV - \Psi_\bV(p)$. The first part implies that
$$
\Psi_\bV - \Psi_\bV(p) = \Phi_p\circ \Psi_p
$$
from which the result follows.
\end{proof}


\begin{thebibliography}{99}

\bibitem{bloch-dejong}
S.~Bloch, R.~de Jong, E.~Sert\"oz:
{\em Heights on curves and limits of Hodge structures}, J.\ Lond.\ Math.\ Soc.\ (2) 108 (2023), 340--361.

\bibitem{eds}
R.~Bryant, S.-S.~Chern, R.~Gardner, H.~Goldschmidt, P.~Griffiths:
{\em Exterior differential systems}. Mathematical Sciences Research Institute Publications, 18. Springer-Verlag, 1991. 

\bibitem{carlson}
J.~Carlson:
{\em Extensions of mixed Hodge structures}. Journ\'ees de G\'eometrie Alg\'ebrique d'Angers, Juillet 1979, pp.~107--127, Sijthoff \& Noordhoff, 1980.

\bibitem{gillet-soule}
H.~Gillet, C.~Soul\'e:
{\em Arithmetic intersection theory}, Inst.\ Hautes \'Etudes Sci.\ Publ.\ Math.\ No.~72 (1990), 93--174 (1991).

\bibitem{chen}
K.-T.~Chen:
{\em Iterated path integrals}, Bull.\ Amer.\ Math.\ Soc.\ 83 (1977), 831--879.

\bibitem{deligne:h2}
P.~Deligne:
{\em Th\'eorie de Hodge, II}. Inst.\ Hautes \'Etudes Sci.\ Publ.\ Math.\ No.~40 (1971), 5--57.

\bibitem{smt}
M.~Goresky, R.~MacPherson:
{\em Stratified Morse theory}. Ergebnisse der Mathematik und ihrer Grenzgebiete (3) 14. Springer-Verlag, Berlin, 1988.

\bibitem{goncharov}
A.~Goncharov:
{\em Hodge correlators}, J. Reine Angew.\ Math.\ 748 (2019), 1--138.

\bibitem{hain:bowdoin}
R.~Hain:
{\em The geometry of the mixed Hodge structure on the fundamental group}, Algebraic geometry, Bowdoin, 1985,  247--282, Proc.\ Sympos.\ Pure Math., 46, Part 2, Amer.\ Math.\ Soc., 1987. 

\bibitem{hain:dht}
R.~Hain:
{\em The de Rham homotopy theory of complex algebraic varieties, I}, K-Theory 1 (1987), 271--324.

\bibitem{hain:biext}
R.~Hain:
{\em Biextensions and heights associated to curves of odd genus}, Duke Math.\ J.\ 61 (1990), 859--898.

\bibitem{hain:ceresa}
R.~Hain:
{\em The rank of the normal functions of the Ceresa and Gross--Schoen cycles}, {\sf [arXiv:2408.07809]}

\bibitem{hain-zucker}
R.~Hain, S.~Zucker:
{\em Unipotent variations of mixed Hodge structure}, Invent.\ Math.\ 88 (1987), 83--124.

\bibitem{dejong:jumps}
R.~de~Jong, F.~Shokrieh:
{\em Jumps in the height of the Ceresa cycle}, J.\ Differential Geom.\ 126 (2024), 169--214.

\bibitem{quillen}
D.~Quillen:
{\em Rational homotopy theory}, Ann.\ of Math.\  90 (1969), 205--295.

\bibitem{stallings}
J.~Stallings:
{\em Homology and central series of groups}, J.\ Algebra 2 (1965), 170--181.

\end{thebibliography}
\end{document}